\def\R{\mathbb{R}}
\def\Z{\mathbb{Z}}
\def\N{{\mathbb{N}}}
\def\C{{\mathcal{C}}}
\theoremstyle{definition}
\newtheorem{theorem}{Theorem}[section]
\newtheorem{proposition}[theorem]{Proposition}
\newtheorem{lemma}[theorem]{Lemma}
\newtheorem{corollary}[theorem]{Corollary}
\newtheorem{definition}[theorem]{Definition}
\newtheorem{conjecture}[theorem]{Conjecture}
\newtheorem{remark}[theorem]{Remark}
\newtheorem{example}[theorem]{Example}
\newtheorem*{acknowledgement}{Acknowledgements}
\newtheorem*{note}{Note}
\newtheorem*{thmtorussig}{Theorem \ref{Thm:L2torussig}}
\newcommand{\eps}{\varepsilon}
\begin{document}

\title{The $L^2$ signature of torus knots}
\author{Julia Collins}
\address{School of Mathematics\newline University of Edinburgh\newline James Clerk Maxwell Building\newline The King's Buildings\newline Mayfield Road\newline Edinburgh EH9 3JZ\newline Scotland, UK\newline \medskip}
\email{J.Collins-3@sms.ed.ac.uk}
\subjclass[2010]{Primary 57M25, 57M27}
\thanks{The author is supported by an EPSRC Doctoral Training Account.}

\begin{abstract}
We find a formula for the $L^2$ signature of a $(p,q)$ torus knot, which is the integral of the $\omega$-signatures over the unit circle.  We then apply this to a theorem of Cochran-Orr-Teichner to prove that the $n$-twisted doubles of the unknot, $n\neq 0,2$, are not slice.  This is a new proof of the result first proved by Casson and Gordon.
\end{abstract}

\maketitle

\begin{note}
It has been drawn to my attention that the main theorem of this paper, Theorem \ref{Thm:L2torussig}, was first proved in $1993$ by Robion Kirby and Paul Melvin \cite{KirbyMelvin94} using essentially the same method presented here.  The theorem has also recently been reproved using different techniques by Maciej Borodzik \cite{Borodzik09}. Despite the duplication of effort, I hope that readers will enjoy the exposition given here and the new corollaries which follow.
\end{note}

\section{Introduction}

Before we give any definitions of signatures or slice knots, let us first motivate the subject with a simple but difficult problem in number theory.  Suppose that you are given two coprime integers, $p$ and $q$, together with another (positive) integer $n$ which is neither a multiple of $p$ nor of $q$.  Write
\[ n= ap+bq, \quad a,b \in \Z, \quad 0 < a < q. \]
Now we ask the question:
\begin{center}
  ``Is $b$ positive or negative?''
\end{center}

Clearly, given any \emph{particular} $p$ and $q$, the answer is easy to work out, so the question is whether there is an (explicit) formula which could anticipate the answer.  Let us define
\[ j(n) = \begin{cases}\phantom{-}1 & \text{if $b>0$,}\\ -1 & \text{if $b<0$}\end{cases} \]
and let us study the sum
\[ s(n) = \sum_{i=1}^n j(i)\]
as $n$ varies between $1$ and $pq-1$.  It would not be an unreasonable first guess to suggest that $j(n)$ is $-1$ for the first $\lfloor \frac{pq}{2} \rfloor$ values of $n$, and $+1$ for the other half of the values.  Indeed, this is true when $p=2$ (see the nice `V' shape in Figure~\ref{fig:edge-a}).  But if we investigate other values of $p$ and $q$  then strange `wiggles' in the graph of $s$ start appearing (see Figures~\ref{fig:edge-b}, \ref{fig:edge-c} and \ref{fig:edge-d}).

\begin{figure}[htp]
  \begin{center}
    \subfigure[$p=2$, $q=19$]{\label{fig:edge-a}\includegraphics[width=8cm]{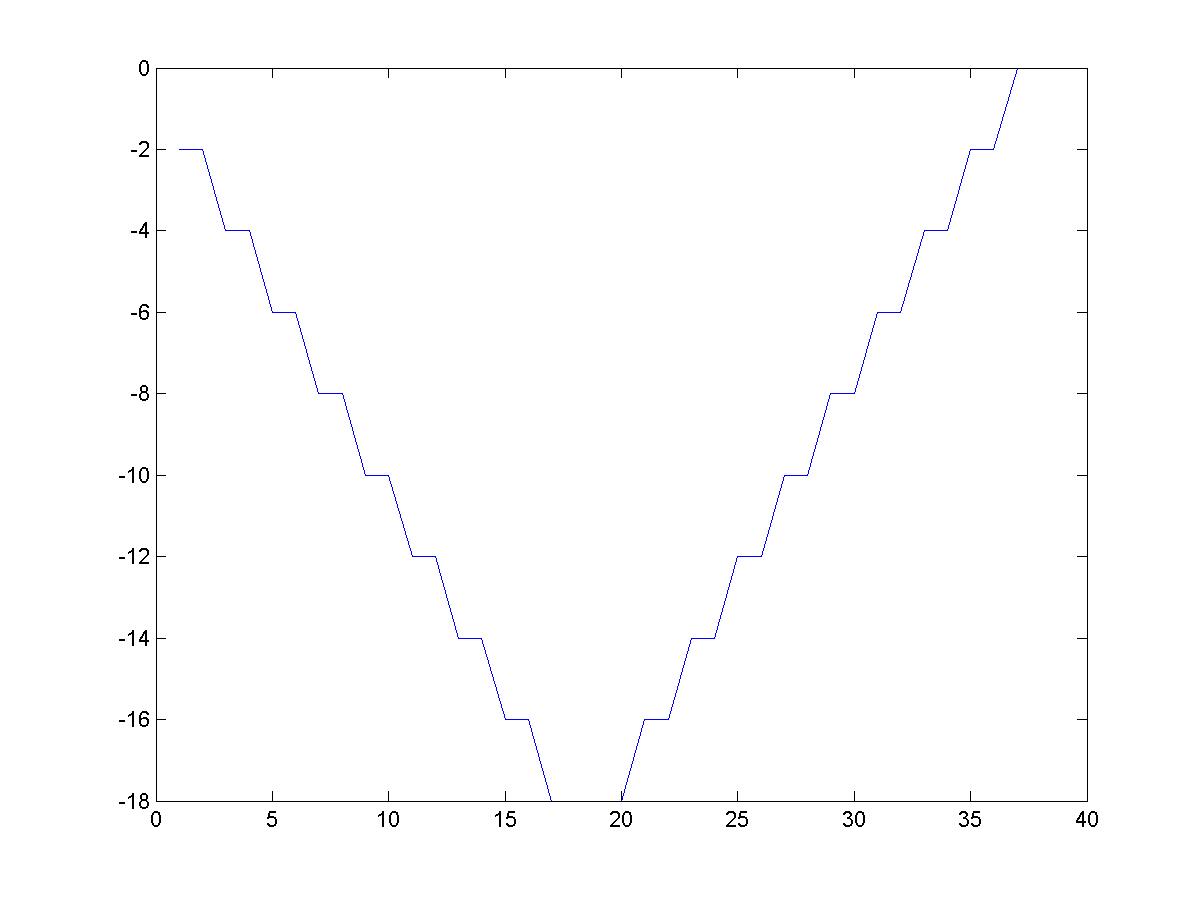}}
    \subfigure[$p=3$, $q=10$]{\label{fig:edge-b}\includegraphics[width=8cm]{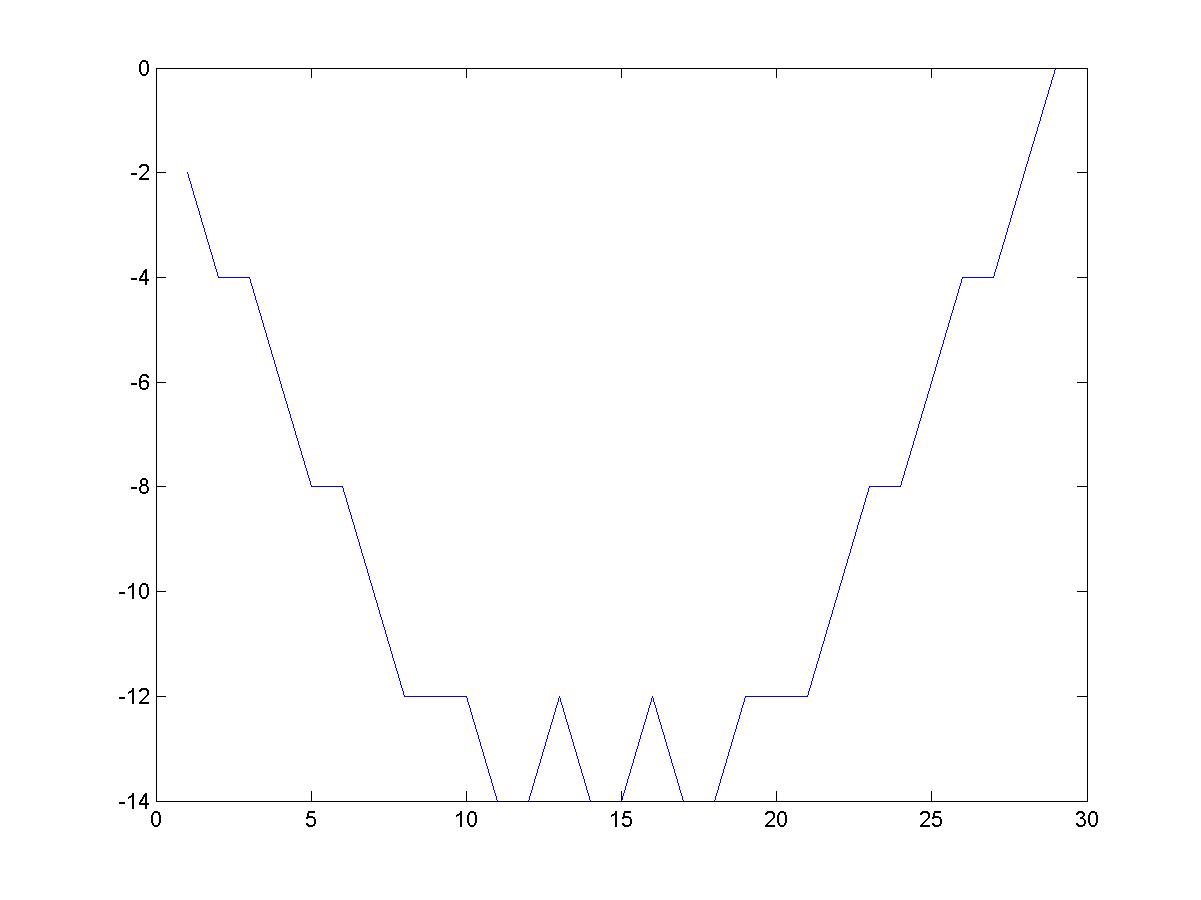}}
    \subfigure[$p=5$, $q=24$]{\label{fig:edge-c}\includegraphics[width=8cm]{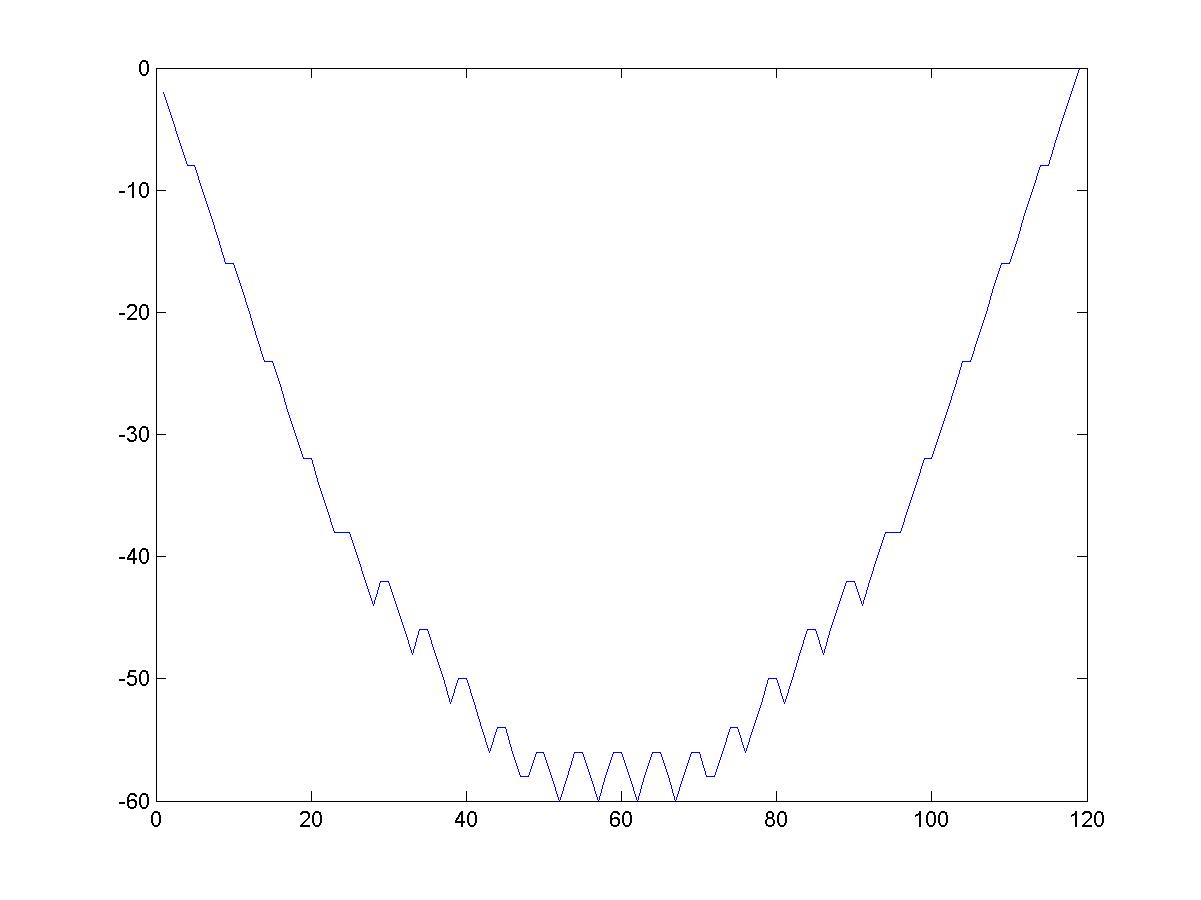}}
    \subfigure[$p=7$, $q=16$]{\label{fig:edge-d}\includegraphics[width=8cm]{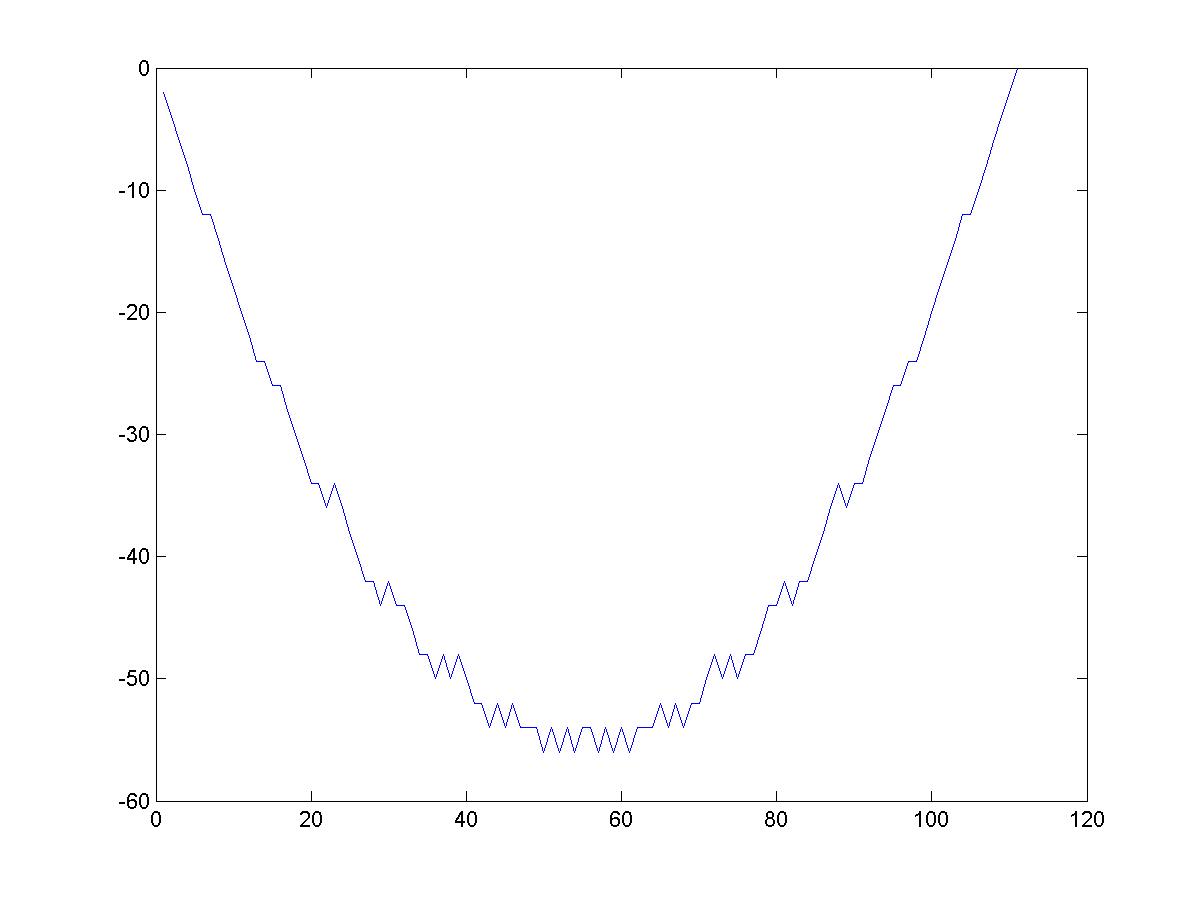}}
  \end{center}
  \caption{Graphs of $2s$ for various values of $p$ and $q$.}
  \label{graphs}
\end{figure}

\medskip

It turns out that the clue to finding the pattern is to realise that the function $j$ is the jump function of the $\omega$-signature of a $(p,q)$ torus knot.  What does this mean?  A torus knot $T_{p,q}$ is a knot which lives on the boundary of a torus, wrapping $p$ times around the meridian and $q$ times around the longitude. (If $p$ and $q$ are not coprime then $T_{p,q}$ is a link rather than a knot.)  Given a non-singular Seifert matrix $V$ for $T_{p,q}$ and a unit complex number $\omega$, the \emph{$\omega$-signature} $\sigma_\omega$ is the sum of the signs of the eigenvalues of the hermitian matrix
\[(1-\omega)V + (1-\overline{\omega})V^T.\]
This is independent of the choice of Seifert matrix.  The $\omega$-signature is an integer-valued function that is continuous (and therefore constant) everywhere except at the unit roots of the Alexander polynomial $\Delta_{p,q}(t) = \det(V-tV^T)$.  At these points, the signature `jumps', with the value of the jump at $\omega = e^{2\pi i n/pq}$ being given by $2j(n)$.

\medskip

The $\omega$-signature has proved to be useful in a variety of areas of mathematics; see Stoimenow's paper \cite{Stoimenow05} to get a comprehensive list, including applications to unknotting numbers of knots, Vassiliev invariants and algebraic functions on projective spaces. The jump function in particular appears to be related to the Jones polynomial~\cite{Garoufalidis03}.  But the most important use for signatures is in the study of knot concordance.

\medskip

A slice knot is one which is the boundary of a locally flat disc $D^2$ embedded into $D^4$.  We define the concordance group $\C$ to be the set of all knots $S^1 \hookrightarrow S^3$ under the equivalence relation $\sim$, where $K_1 \sim K_2$ if $K_2\#-K_1$ is slice.  Here $\#$ means connect sum and $-K_1$ is the mirror image of $K_1$ with the reverse orientation.  The structure of $\C$ remains a mystery, but recently Cochran, Orr and Teichner~\cite{COT03} having been probing its secrets using the techniques of $L^2$ signatures.  Amazingly, it turns out that a special case of these $L^2$ signatures turns out to be the integral of the $\omega$-signatures over the unit circle.

\medskip

Even more amazingly, it turns out that despite the $\omega$-signatures being fairly unpredictable for a torus knot, the \emph{integral} of the $\omega$-signatures has the following beautiful formula.

\begin{thmtorussig}
  Let $p$ and $q$ be coprime positive integers.  Then the integral of the $\omega$-signatures of the $(p,q)$ torus knot is
  \[ \displaystyle \int_{S^1} \sigma_{\omega} = -\frac{(p-1)(p+1)(q-1)(q+1)}{3pq} \text{ .} \]
\end{thmtorussig}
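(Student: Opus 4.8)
The plan is to convert the integral into a finite sum over the jump points, repackage that sum as a weighted count of lattice points in a triangle, and evaluate the count by the reciprocity law for Dedekind sums.

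First I would normalise $\int_{S^1}\sigma_\omega=\int_0^1\sigma_{e^{2\pi i\theta}}\,d\theta$ and use the structure of $\sigma$ recalled in the introduction. Since $\Delta_{p,q}(1)\neq0$ and the hermitian form $(1-\omega)V+(1-\overline{\omega})V^T$ vanishes at $\omega=1$, we have $\sigma_{e^{2\pi i\theta}}=0$ for $\theta$ near $0$, and as $\theta$ increases through $n/pq$ the signature jumps by $2j(n)$ (extend $j$ by $j(n)=0$ when $p\mid n$ or $q\mid n$). Hence $\sigma_{e^{2\pi i\theta}}=2\sum_{1\le m<pq\theta}j(m)$, and integrating over each interval $\big(\tfrac{m}{pq},\tfrac{m+1}{pq}\big)$ gives
\[ \int_{S^1}\sigma_\omega=\frac{2}{pq}\sum_{m=1}^{pq-1}(pq-m)\,j(m)=2\sum_{m=1}^{pq-1}j(m)-\frac{2}{pq}\sum_{m=1}^{pq-1}m\,j(m). \]
The symmetry $j(pq-n)=-j(n)$, immediate from $n=ap+bq$ with $0<a<q$ giving $pq-n=(q-a)p+(-b)q$, kills the first sum, so the whole problem reduces to evaluating $\Sigma:=\sum_{m=1}^{pq-1}m\,j(m)$. (The sign convention on the jump is fixed by the check that the right-handed trefoil $T_{2,3}$, with $\sigma\equiv-2$ on $(\tfrac16,\tfrac56)$, must return $-\tfrac43$.)

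Next I would use the bijection between admissible $n$ and pairs $(a,b)$ with $a\in\{1,\dots,q-1\}$, $b\in\{-(p-1),\dots,p-1\}\setminus\{0\}$ and $n=ap+bq$, under which $j(n)=\sign(b)$. Writing $\Sigma=\sum_{b>0}n-\sum_{b<0}n$, using $n\mapsto pq-n$ to trade the second sum for the first, and the elementary identity $\sum_{\text{admissible }n}n=\tfrac12 pq(p-1)(q-1)$, one gets $\Sigma=2\sum_{b>0}n-\tfrac12 pq(p-1)(q-1)$. Now $\sum_{b>0}n=p\sum_{(a,b)\in\Lambda}a+q\sum_{(a,b)\in\Lambda}b$, where $\Lambda$ is the set of interior lattice points of the triangle with vertices $(0,0),(q,0),(0,p)$ (the constraint $n<pq$ is exactly $\tfrac aq+\tfrac bp<1$). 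For fixed $a$ there are $p-1-\lfloor pa/q\rfloor$ admissible values of $b$, so $\sum_{(a,b)\in\Lambda}a=(p-1)\tfrac{q(q-1)}{2}-\Phi(p,q)$, where $\Phi(p,q):=\sum_{a=1}^{q-1}a\lfloor pa/q\rfloor$, and symmetrically for $\sum b$. Assembling these pieces yields $\int_{S^1}\sigma_\omega=-3(p-1)(q-1)+\tfrac4q\Phi(p,q)+\tfrac4p\Phi(q,p)$.

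Finally I would evaluate $\Phi(p,q)$. Writing $\lfloor pa/q\rfloor=\tfrac{pa}{q}-\tfrac1q(pa\bmod q)$ and using that $a\mapsto pa\bmod q$ permutes $\{1,\dots,q-1\}$, $\Phi(p,q)$ becomes a combination of $\sum a^2=\tfrac{(q-1)q(2q-1)}{6}$ and $\sum_a a\,(pa\bmod q)$, and the latter equals $q^2\big(s(p,q)+\tfrac{q-1}{4}\big)$, where $s(p,q)=\sum_{a=1}^{q-1}((a/q))\,((pa/q))$ is the classical Dedekind sum. Substituting, the combination $\tfrac4q\Phi(p,q)+\tfrac4p\Phi(q,p)$ contains precisely the term $-4\big(s(p,q)+s(q,p)\big)$; the reciprocity law $s(p,q)+s(q,p)=-\tfrac14+\tfrac1{12}\big(\tfrac pq+\tfrac qp+\tfrac1{pq}\big)$ removes the Dedekind sums, and collecting the remaining rational terms collapses everything to $-\tfrac13\big(pq-\tfrac pq-\tfrac qp+\tfrac1{pq}\big)=-\tfrac{(p^2-1)(q^2-1)}{3pq}$, which is the stated formula. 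I expect the middle step to be the main obstacle: turning the rather erratic jump data $\{j(n)\}$ into a clean lattice-point sum over the triangle, and keeping every sign convention consistent throughout. Once the quantity is in the shape $\tfrac4q\Phi(p,q)+\tfrac4p\Phi(q,p)+(\text{polynomial in }p,q)$, recognising that Dedekind reciprocity is exactly the identity needed — and is precisely strong enough to eliminate the non-elementary part — is the conceptual crux; what remains is routine algebra.
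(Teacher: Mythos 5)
Your argument is correct, and I checked the key identities: the reduction to $\int_{S^1}\sigma_\omega=\tfrac{2}{pq}\sum_{m}(pq-m)j(m)$ is exactly equation (4) of the paper's proof, your intermediate formula $\int_{S^1}\sigma_\omega=-3(p-1)(q-1)+\tfrac4q\Phi(p,q)+\tfrac4p\Phi(q,p)$ is verified (e.g.\ for $(p,q)=(2,3)$ and $(2,5)$ it returns $-\tfrac43$ and $-\tfrac{12}{5}$), the identity $\sum_a a\,(pa\bmod q)=q^2\bigl(s(p,q)+\tfrac{q-1}{4}\bigr)$ follows from $((a/q))=a/q-\tfrac12$ together with the fact that $a\mapsto pa\bmod q$ permutes $\{1,\dots,q-1\}$, and the final algebra after reciprocity does collapse to $-\tfrac13\bigl(pq-\tfrac pq-\tfrac qp+\tfrac1{pq}\bigr)$. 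The first half of your proof coincides with the paper's: both reduce the integral to a signed, weighted sum of $n$ over the interior lattice points of the triangle with vertices $(0,0),(q,0),(0,p)$ (the paper's set $S$ is your $\Lambda$ read through $n=qx+py$), using the same antisymmetry $j(pq-n)=-j(n)$. Where you diverge is in evaluating that lattice sum: the paper simply quotes Mordell's closed formula $\sum_{n\in S}n=\tfrac13pq(p-1)(q-1)+\tfrac1{12}(p-1)(q-1)(p+q+1)$, whereas you derive the equivalent quantity from scratch by splitting it into the floor sums $\Phi(p,q)=\sum_a a\lfloor pa/q\rfloor$ and invoking Dedekind reciprocity. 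Your route is longer but self-contained modulo a genuinely classical input, and it makes visible \emph{why} the answer is a clean rational function of $p$ and $q$: the non-elementary Dedekind sums appear only in the combination $s(p,q)+s(q,p)$, which reciprocity eliminates. (This is in the spirit of the Kirby--Melvin proof the paper mentions.) Two cosmetic points: your "bijection" onto $\{1,\dots,q-1\}\times(\{-(p-1),\dots,p-1\}\setminus\{0\})$ is really only an injection, since that product set has twice as many elements as there are admissible $n$ --- but you only use well-definedness and injectivity, so nothing breaks; and you take the jump description $j(n)=\sign(b)$ as given from the introduction, where the paper derives it from Litherland's formula (Proposition \ref{Prop:torusjump} and Lemma \ref{Lemma:oneof}), so strictly speaking that input should be attributed rather than assumed.
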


In this paper we prove Theorem \ref{Thm:L2torussig}, the reason for which is that the result can be combined with a theorem in \cite{COT03} to recover the old Casson-Gordon theorem that the twist knots are not slice.  The hope is that the techniques here may prove useful in investigating signatures of other families of knots and in proving more general theorems about the structure of the concordance group.

\medskip

\textbf{Structure of paper.} In Section 2 we give the definitions of the signatures and the associated jump functions we will be using.  Section 3 contains an analysis of the jump function of torus knots followed by the main result of the paper.  In Section 4 we apply this to the question of sliceness of twist knots and twisted doubles.

\section{Signatures and jump functions}

Let $K$ be a knot, $V$ be a Seifert matrix for $K$ of size $2g \times 2g$ and $\omega$ be a unit complex number.  The notation $\bar{\phantom{\omega}}$ denotes complex conjugation, whilst $^T$ means matrix transposition.

\medskip

We would like to define the $\omega$-signature to be the signature of $P:=(1-\omega)V + (1-\overline{\omega})V^T$.  However, notice that $P=(1-\omega)(V-\overline{\omega}V^T)$ and $\det P = (1-\omega)^{2g}\Delta_K(\overline{\omega})$ where $\Delta_K(t) := \det(V-tV^T)$ is the Alexander polynomial of $K$.  This means that $P$ becomes degenerate at the unit roots of the Alexander polynomial and we will need an alternative definition of the signature at these points.

\begin{definition}
For a unit complex number $\omega$ which is not a root of the Alexander polynomial $\Delta_K$, the \emph{$\omega$-signature} $\sigma_\omega(K)$ is the signature (i.e. the sum of the signs of the eigenvalues) of the hermitian matrix
  \[ P:=(1-\omega)V + (1-\overline{\omega})V^T \text{ .} \]
If $\omega$ is a unit root of $\Delta_K$, we define $\sigma_\omega(K)$ to be the average of the limit on either side.
\end{definition}

This concept was formulated independently by Levine~\cite{Levine69} and Tristram~\cite{Tristram69}; hence the $\omega$-signature is sometimes called the \emph{Levine-Tristram signature}.  It is a generalisation of the usual definition of a knot signature, i.e. the signature of $V+V^T$, which was developed by \cite{Trotter62, Murasugi65}.

\medskip

The function $\sigma_\omega$ is continuous as a function of $\omega$ except at roots of the Alexander polynomial .  Since $\sigma_\omega$ is integer-valued, this means that it is a step function with jumps at the roots of $\Delta_K$.

\begin{definition}
  The jump function $j_K \colon [0,1) \to \Z$ of a knot $K$ is defined by
    \[ j_K(x) = \displaystyle \frac{1}{2} \lim_{\eps \to 0} (\sigma_{\xi^+}(K) - \sigma_{\xi^-}(K)) \]
  where $\xi^+ = e^{2\pi i (x + \eps)}$ and $\xi^- = e^{2\pi i (x - \eps)}$ for $\eps>0$.
\end{definition}

\begin{lemma}
\label{Lemma:jump_properties}
  The jump function $j_K$ and the $\omega$-signature $\sigma_\omega(K)$ have the following properties.
  \begin{enumerate}
    \item $j_K(x) = 0$ if $e^{2\pi i x}$ is not a unit root of the Alexander polynomial of $K$.
    \item In particular, $j_K(0)=0$ and $\sigma_1(K) = 0$.
    \item $\sigma_\omega(K) = \sigma_{\overline{\omega}}(K)$ so $j_K(x) = -j_K(1-x)$.
    \item $\displaystyle \sigma_{e^{2\pi i x}}(K) = 2\sum_{y \in [0,x]} j_K(y)$ if $e^{2\pi i x}$ is not a root of the Alexander polynomial of $K$. (Notice that this is a finite sum because only finitely many of the jumps are non-zero.)
  \end{enumerate}
\end{lemma}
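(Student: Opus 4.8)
The plan is to deduce all four statements directly from the definitions, using only two standard facts about a Seifert matrix $V$ of a knot: that $V$ is a \emph{real} $2g\times 2g$ integer matrix, and that $\Delta_K(1)=\det(V-V^T)=\pm 1$, so in particular $\omega=1$ is never a root of $\Delta_K$.

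The heart of the matter is the continuity statement underlying (1): on the complement in $S^1$ of the (finitely many) roots of $\Delta_K$, the function $\omega\mapsto\sigma_\omega(K)$ is locally constant. Indeed, the eigenvalues of the hermitian matrix $P(\omega)=(1-\omega)V+(1-\bar\omega)V^T$ depend continuously on $\omega$, so the signature can change only at a value of $\omega$ where some eigenvalue passes through $0$, that is, where $P(\omega)$ is singular; since $\det P(\omega)=(1-\omega)^{2g}\Delta_K(\bar\omega)$, this can only happen at a root of $\Delta_K$ or at $\omega=1$. The point $\omega=1$ needs separate care, because $P(1)=0$ is singular even though $1$ is not a root of $\Delta_K$: writing $\omega=e^{i\theta}$ and expanding to first order gives $P(\omega)=\theta\bigl(-i(V-V^T)\bigr)+O(\theta^2)$, and $-i(V-V^T)$ is a \emph{nonsingular} hermitian matrix whose eigenvalues occur in $\pm$ pairs (as $V-V^T$ is real skew-symmetric with $\det(V-V^T)=\pm 1$), hence has signature $0$; so $\sigma_\omega=0$ for all $\omega$ in a punctured neighbourhood of $1$, while also $\sigma_1=0$ since $P(1)$ is the zero matrix. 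This proves the second clause of (2) and shows $\sigma_\omega$ is in fact continuous, with value $0$, near $\omega=1$. Given local constancy, (1) is immediate: if $e^{2\pi i x}$ is not a root then $\sigma_{\xi^+}=\sigma_{\xi^-}$ for all small $\eps$, so $j_K(x)=0$; and the first clause of (2), $j_K(0)=0$, is the special case $x=0$, since $1$ is not a root.

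For (3), the identity $\sigma_\omega=\sigma_{\bar\omega}$ at non-roots comes from $P(\bar\omega)=P(\omega)^T$ — transposition interchanges the coefficients of $V$ and $V^T$ — together with the fact that a matrix and its transpose have the same spectrum; this extends to the roots because there $\sigma_\omega$ is defined as the average of the two one-sided limits and complex conjugation interchanges the two sides. The relation $j_K(x)=-j_K(1-x)$ then follows by bookkeeping: with $\xi^{\pm}=e^{2\pi i(x\pm\eps)}$ and $\eta^{\pm}=e^{2\pi i((1-x)\pm\eps)}$ one has $\eta^{\pm}=\overline{\xi^{\mp}}$, so $\sigma_{\eta^{+}}-\sigma_{\eta^{-}}=\sigma_{\xi^{-}}-\sigma_{\xi^{+}}=-2\,j_K(x)$ in the limit.

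Finally, (4) is a telescoping argument: as the angle $t$ runs from $0$ to $x$, $\sigma_{e^{2\pi i t}}$ is a step function whose only jumps occur at the (finitely many) roots of $\Delta_K$ on that arc, and by definition the jump at $e^{2\pi i y}$ equals $2\,j_K(y)$; summing the jumps and using $\sigma_1=0$ from (2) gives $\sigma_{e^{2\pi i x}}=2\sum_{y\in[0,x]}j_K(y)$, the endpoint contributions at $y=0$ and $y=x$ being zero by (2) and (1) respectively. I do not expect any serious obstacle; the only point needing genuine (if standard) care is the behaviour at $\omega=1$ in the proofs of (1)–(2), where $P$ degenerates for a reason unrelated to the Alexander polynomial, so one cannot simply invoke the general continuity statement as a black box.
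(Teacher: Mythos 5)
The paper states this lemma without proof, treating all four items as standard facts, so there is no argument of the author's to compare against. Your proposal is a correct and complete derivation from the definitions: the factorisation $\det P(\omega)=(1-\omega)^{2g}\Delta_K(\bar\omega)$ gives local constancy of the signature away from the roots, the identity $P(\bar\omega)=P(\omega)^T$ gives (3), and the telescoping gives (4). You also correctly isolate the one point that genuinely needs care --- the degeneration of $P$ at $\omega=1$, where the first-order term $-i(V-V^T)$ is a nonsingular hermitian matrix of signature zero because $V-V^T$ is real skew-symmetric and unimodular --- which is exactly the step most expositions gloss over when asserting $\sigma_1(K)=0$ and the vanishing of $\sigma_\omega$ near $1$.
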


It has been known for some time that the usual knot signature $\sigma_{-1}$ vanishes for slice knots~\cite{Murasugi65} and that it is thus a concordance invariant.  The same is true for all the $\omega$-signatures (excepting the jump points).  In fact, it turns out that the integral of the $\omega$-signatures is a special case of a more powerful invariant.

\begin{definition}
An $L^2$-signature (or $\rho$-invariant) of a knot $K$  is a number $\rho(M,\phi) \in \R$ associated to a representation $\phi\colon \pi_1(M) \to \Gamma$, where $M$ is the zero-framed surgery on $S^3$ along $K$ and $\Gamma$ is a group.
\end{definition}

The precise definition is complicated and may be found in \cite[Section 5]{COT03}.  $L^2$ signatures are, in general, very difficult to compute.  However, if we pick a nice group for $\Gamma$ then magic happens and we get an explicit formula:

\begin{lemma}\cite[Lemma 5.4]{COT03}
  When $\Gamma = \Z$, we have that $\rho(M,\phi) = \int_{\omega \in S^1} \sigma_{\omega}(K)$, normalised to have total measure $1$.
\end{lemma}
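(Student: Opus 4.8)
The plan is to unpack the definition of the $L^2$ signature (the Cheeger--Gromov $\rho$-invariant) that \cite{COT03} uses, in the special case $\Gamma=\Z$, where the group von Neumann algebra collapses to $L^\infty(S^1)$ and the von Neumann trace becomes ordinary integration against normalised Haar measure. Since every homomorphism $\pi_1(M)\to\Z$ factors through the abelianisation $H_1(M)\cong\Z$, say as multiplication by $n$, and since $\omega\mapsto\omega^{n}$ preserves Haar measure, I may assume $\phi$ is the standard surjection. Now $(M,\phi)$ bounds: every closed oriented $3$-manifold with a map to $S^1=B\Z$ is null-bordant ($\Omega_3^{SO}(S^1)=0$), so there is a compact oriented $4$-manifold $W$ with $\partial W=M$ together with an extension $\psi\colon\pi_1(W)\to\Z$ of $\phi$. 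By the signature-defect description of $\rho$,
\[ \rho(M,\phi)=\sigma^{(2)}_{\Z}(W,\psi)-\sigma(W), \]
where $\sigma(W)$ is the ordinary signature of $W$ and $\sigma^{(2)}_{\Z}(W,\psi)$ is the von Neumann signature of the intersection form on $H_2(W;\mathcal{N}\Z)$, equivalently the $L^2$-signature of the induced $\Z$-cover of $W$. Everything comes down to evaluating this right-hand side.

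Next I would Fourier transform. The assignment $t\mapsto(z\mapsto z)$ identifies $\mathcal{N}\Z$ with $L^\infty(S^1)$, carries $\Z[t,t^{-1}]$ onto the Laurent polynomials, sends the algebra of affiliated operators to the measurable functions $L^0(S^1)$, and turns the von Neumann trace into $f\mapsto\int_{S^1}f$. Under this identification the twisted intersection form on (the free part of) $H_2(W;\mathbb{C}[t^{\pm1}])$ becomes a hermitian Laurent-polynomial matrix $A(z)$, best regarded as a measurable family $\{A(\omega)\}_{\omega\in S^1}$ of hermitian complex matrices. The positive and negative spectral projections of $A$ computed inside $L^\infty(S^1)$ are exactly the pointwise spectral projections, so applying the identity (von Neumann trace $=$ integration) to each of them yields
\[ \sigma^{(2)}_{\Z}(W,\psi)=\int_{S^1}\sign A(\omega)\,d\omega ; \]
here the set on which $A(\omega)$ is degenerate is the zero set of the Laurent polynomial $\det A$, hence either finite or all of $S^1$ (the latter case handled by first splitting off the null submodule), so it contributes nothing to the integral. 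Writing $\sigma_\omega(W):=\sign A(\omega)$, the Lemma is reduced to the pointwise identity $\sigma_\omega(W)-\sigma(W)=\sigma_\omega(K)$ for almost every $\omega$.

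To prove that identity I would take $W$ explicitly: let $F$ be a genus-$g$ Seifert surface for $K$ with Seifert matrix $V$, and let $W$ be $D^4$ with a $0$-framed $2$-handle attached along $K$. Then $\partial W=M$, $\pi_1(W)\cong\Z$, and $\sigma(W)=0$ since the ordinary intersection form on $H_2(W;\Z)\cong\Z$ vanishes. A standard computation of $H_2$ of the infinite cyclic cover of $W$ in terms of the Seifert pairing (of the type carried out by Levine, Kauffman--Taylor and Litherland, and used in \cite{COT03}) then identifies the twisted intersection form at $\omega$, up to congruence over $\mathbb{C}$, with $(1-\omega)V+(1-\overline\omega)V^T$, so $\sigma_\omega(W)=\sigma_\omega(K)$ off the finitely many roots of $\Delta_K$. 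Assembling the three steps, and noting that the roots of $\Delta_K$ form a null set so the averaging convention at the jump points is invisible to the integral, gives $\rho(M,\phi)=\int_{S^1}\sigma_\omega(K)$ normalised to total measure $1$, as claimed.

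The genuine obstacle is the middle step: making rigorous the assertion that the $L^2$ signature over $L^\infty(S^1)$ is the integral over $S^1$ of the pointwise signatures of $A(\omega)$. The twisted homology $H_2(W;\mathbb{C}[t^{\pm1}])$ need not be a free module and the form on it need not be nondegenerate, so one must either arrange $W$ to have torsion-free equivariant homology or else argue via the larger ring $\mathcal{U}\Z$ of affiliated operators as in \cite{COT03}, and in either case check that enlarging the coefficients from $\mathbb{C}[t^{\pm1}]$ to $\mathcal{N}\Z$ (or $\mathcal{U}\Z$) neither creates nor destroys signature away from a finite set of $\omega$; the positivity and continuity estimates that this requires are precisely the analytic core of \cite[Section 5]{COT03}. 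The third step is classical but also relies on the non-formal input that this particular $W$ presents its twisted intersection form by $(1-\omega)V+(1-\overline\omega)V^T$; granting that, the rest is bookkeeping.
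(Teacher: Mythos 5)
The paper itself offers no proof of this statement --- it is quoted verbatim from \cite[Lemma 5.4]{COT03} --- so there is no in-paper argument to compare yours against, and I can only judge the proposal on its own terms. Your overall architecture is the standard and correct one: reduce to the abelianisation, write $\rho(M,\phi)$ as a signature defect over a null-bordism of $(M,\phi)$ (using $\Omega_3^{SO}(S^1)=0$), identify $\mathcal{N}\Z$ with $L^\infty(S^1)$ and the von Neumann trace with normalised integration so that the $L^2$ signature becomes the integral of pointwise signatures, and finally invoke the classical presentation of the twisted intersection form by $(1-\omega)V+(1-\overline{\omega})V^T$. You are also appropriately honest about where the analytic content of \cite[Section 5]{COT03} enters.

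There is, however, a concrete error in the third step that breaks the computation as written. The $4$-manifold you choose --- $D^4$ with a $0$-framed $2$-handle attached along $K$ --- does have boundary $M$, but van Kampen gives $\pi_1(W)=1$, not $\Z$: the $2$-handle is attached to a simply connected manifold. Correspondingly $H_1(M)\cong\Z\to H_1(W)=0$ kills the meridian, so the surjection $\phi$ does \emph{not} extend to any $\psi\colon\pi_1(W)\to\Z$ with $\psi\circ i_*=\phi$; this $W$ is not a null-bordism of $(M,\phi)$ over $B\Z$. If one nevertheless runs your argument with it, the relevant cover is the disconnected $W\times\Z$, the twisted intersection form is just the ordinary (zero) form on $H_2(W;\Z)\cong\Z$, and the conclusion would be $\rho(M,\phi)=0$ for every knot --- contradicting Theorem \ref{Thm:L2torussig} already for the trefoil. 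What you need is a $4$-manifold $W$ with $\partial W=M$ and $H_1(W)\cong\Z$ generated by the image of the meridian; the standard explicit choice is built from the complement in $D^4$ of a pushed-in Seifert surface (as in Casson--Gordon, Kauffman--Taylor and Litherland), and it is for \emph{that} $W$ that the equivariant intersection form is congruent to $(1-\omega)V+(1-\overline{\omega})V^T$. Alternatively, take an arbitrary null-bordism over $B\Z$, note that the defect $\sigma_\omega(W)-\sigma(W)$ is independent of the choice by Novikov additivity together with the vanishing of the defect for closed manifolds, and quote the Atiyah--Patodi--Singer/Levine identification of these defects with the Levine--Tristram signatures. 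Either repair is routine, but the $0$-trace cannot serve as the bounding manifold.
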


Henceforth we shall refer to the integral of the $\omega$-signatures as \emph{the} $L^2$ signature of the knot.

\medskip

We end the section with a formula relating the $L^2$ signature of a knot to its jump function.

\begin{lemma}
\label{Lemma:L2asjump}
Suppose that the unit roots of the Alexander polynomial of a knot $K$ are $\omega_k = e^{2\pi i x_k}$ for $k=1, \dots, n$ and $x_1 < \dots < x_n$.  Then the $L^2$ signature of $K$ is
\[ \int_{\omega \in S^1} \sigma_{\omega}(K) = 2\sum_{i=1}^{n-1} \left(x_{i+1}-x_i \right) \sum_{k=1}^i j_K(x_k)\]
\end{lemma}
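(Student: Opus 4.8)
The plan is to compute the integral $\int_{\omega \in S^1} \sigma_\omega(K)$ directly by exploiting the fact that $\sigma_\omega$ is a step function whose value is controlled by the jump function via part (4) of Lemma~\ref{Lemma:jump_properties}. The key observation is that, because the integral is normalised to have total measure $1$, integrating $\sigma_{e^{2\pi i x}}$ over $S^1$ amounts to computing $\int_0^1 \sigma_{e^{2\pi i x}}(K)\, dx$. Since the only roots of the Alexander polynomial on the unit circle are the finitely many points $\omega_k = e^{2\pi i x_k}$, the function $x \mapsto \sigma_{e^{2\pi i x}}(K)$ is constant on each open interval $(x_i, x_{i+1})$, and the value of that constant is given by Lemma~\ref{Lemma:jump_properties}(4) as $2\sum_{k=1}^{i} j_K(x_k)$.

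First I would split the interval of integration as $[0,1) = [0,x_1) \cup [x_1,x_2) \cup \dots \cup [x_{n-1},x_n) \cup [x_n,1)$. On $[0,x_1)$ the signature is $\sigma_1(K) = 0$ by Lemma~\ref{Lemma:jump_properties}(2), and similarly (using part (3) or a direct limit argument, together with the fact that $\sum_k j_K(x_k) = 0$ because the total jump around the circle must vanish) the signature on $[x_n,1)$ contributes nothing — or more simply, one observes that the sum $\sum_{k=1}^n j_K(x_k)$ telescopes to zero so the last block vanishes, which is why the outer sum in the statement runs only to $n-1$. On each remaining block $[x_i, x_{i+1})$, which has length $x_{i+1} - x_i$, the integrand is the constant $2\sum_{k=1}^i j_K(x_k)$. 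Summing the contributions of all blocks gives precisely
\[ \int_0^1 \sigma_{e^{2\pi i x}}(K)\, dx = \sum_{i=1}^{n-1} (x_{i+1} - x_i)\cdot 2\sum_{k=1}^i j_K(x_k), \]
which is the claimed formula. One technical point to handle carefully is that the definition of $\sigma_\omega$ at the jump points $\omega_k$ themselves is the average of the two one-sided limits, but since these are a finite (measure-zero) set of points, they do not affect the value of the integral, so I can freely use the value on the open interval when integrating.

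The main obstacle — really the only place requiring genuine care rather than bookkeeping — is justifying that the contribution from the first and last intervals is zero, i.e. confirming that $\sigma_\omega$ is identically $0$ near $\omega = 1$ on both sides. The left side is immediate from part (2) of the lemma, but the right side, $x \in (x_n, 1)$, requires knowing that $\sum_{k=1}^n j_K(x_k) = 0$. This follows because by continuity the limit of $\sigma_{e^{2\pi i x}}(K)$ as $x \to 1^-$ must equal $\sigma_1(K) = 0$, and that limit equals $2\sum_{k=1}^n j_K(x_k)$ by part (4); alternatively one can invoke the symmetry $j_K(x) = -j_K(1-x)$ from part (3). Once this is in place, the rest is just reindexing the telescoped sum, and the proof is complete.
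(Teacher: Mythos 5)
Your proof is correct and follows essentially the same route as the paper: partition the circle at the unit roots of the Alexander polynomial, observe that $\sigma_\omega$ is constant on each arc with value $2\sum_{k=1}^i j_K(x_k)$ by Lemma~\ref{Lemma:jump_properties}(4), and sum the contributions weighted by arc length. You are in fact slightly more careful than the paper, which silently omits the arcs adjacent to $\omega = 1$; your check that $\sigma_\omega$ vanishes there (via $\sum_{k=1}^n j_K(x_k) = 0$) fills in that small gap.
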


\begin{proof}
Let $\xi_k$ be any unit complex number between $\omega_k$ and $\omega_{k+1}$ for $k=1, \dots n-1$. Then we have that
\[ \int_{\omega \in S^1} \sigma_{\omega}(K) = \sum_{i=1}^{n-1} \left(x_{i+1}-x_i\right) \sigma_{\xi_i}\]
where we multiply by $(x_{i+1}-x_i)$ because that is the proportion of the unit circle which has signature $\sigma_{\xi_i}$. We now use (1) and (4) of Lemma \ref{Lemma:jump_properties} to rewrite $\sigma_{\xi_i}$ in terms of the jump function:
\[ \sigma_{\xi_i} = 2\sum_{y \in [0, x_i]} j_K(y)\, = 2\sum_{k=1}^{i} j_K(x_k)\]
\end{proof}

\begin{example}
To illustrate the notation in Lemma \ref{Lemma:L2asjump} we shall calculate the $L^2$ signature of the knot $K := 5_1$, otherwise known as the cinquefoil knot or the $(2,5)$ torus knot.  The Alexander polynomial of $K$ is
\[ 1-t+t^2-t^3+t^4 = \frac{(1-t^{10})(1-t)}{(1-t^2)(1-t^5)}\]
whose roots are the $10^{\text{th}}$ roots of unity that are neither $5^{\text{th}}$ roots of unity nor $-1$.  This means that the roots are $\omega_k =  e^{2\pi i x_k}$ where $x_1 = \frac{1}{10}$, $x_2 = \frac{3}{10}$, $x_3 = \frac{7}{10}$ and $x_4 = \frac{9}{10}$.

\medskip

Let $\xi_1 = e^{\frac{4}{5}\pi i}$, $\xi_2 = e^{\pi i}$ and $\xi_3 = e^{\frac{8}{5}\pi i}$.  Computing the $\omega$-signature at these points gives us $\sigma_{\xi_1} = -2$, $\sigma_{\xi_2} = -4$ and $\sigma_{\xi_3} = -2$.  We can thus draw the signature for every value on the unit circle:
\begin{figure}[h]
\begin{center}
\begin{tikzpicture}[scale=2.5,cap=round]
  \def\cosone{0.80901699}
  \def\sinone{0.58778525}
  \def\costhree{-0.30901699}
  \def\sinthree{0.951056516}
  \def\cosseven{-0.30901699}
  \def\sinseven{-0.951056516}
  \def\cosnine{0.80901699}
  \def\sinnine{-0.58778525}


   \node[above right] at (\cosone,\sinone) {$\omega_1$};
   \node[above left] at (\costhree,\sinthree) {$\omega_2$};
   \node[below left] at (\cosseven,\sinseven) {$\omega_3$};
   \node[below right] at (\cosnine,\sinnine) {$\omega_4$};

   \fill[blue!10!white] (0,0) -- (1,0) arc (0:36:1cm);
   \fill[blue!10!white] (0,0) -- (1,0) arc (0:-36:1cm);
   \fill[blue!20!white] (0,0) -- (\cosone,\sinone) arc (36:108:1cm);
   \fill[blue!20!white] (0,0) -- (\cosone,-\sinone) arc (-36:-108:1cm);
   \fill[blue!30!white] (0,0) -- (\costhree,\sinthree) arc (108:252:1cm);

   \draw (0,0) -- (\cosone,\sinone);
   \draw (0,0) -- (\costhree,\sinthree);
   \draw (0,0) -- (\cosseven,\sinseven);
   \draw (0,0) -- (\cosnine,\sinnine);

   \node at (0.5,0) {$\sigma_\omega = 0$};
   \node at (0.2, \sinone) {$\sigma_\omega = -2$};
   \node at (-0.5,0) {$\sigma_\omega = -4$};
   \node at (0.2, -\sinone) {$\sigma_\omega = -2$};

   \draw (0,0) circle (1cm);

   \draw[fill=black] (36:1cm) circle (0.01cm);
   \node[above] at (72:1cm) {$\xi_1$};
   \draw[fill=black] (72:1cm) circle (0.01cm);
   \draw[fill=black] (108:1cm) circle (0.01cm);
   \draw[fill=black] (-1,0) circle (0.01cm);
   \node[left] at (-1,0) {$\xi_2$};
   \draw[fill=black] (-36:1cm) circle (0.01cm);
   \node[below] at (-72:1cm) {$\xi_3$};
   \draw[fill=black] (-72:1cm) circle (0.01cm);
   \draw[fill=black] (-108:1cm) circle (0.01cm);
\end{tikzpicture}
\end{center}
\end{figure}

We can now compute the $L^2$ signature to be
\begin{eqnarray*}
\int_{\omega \in S^1} \sigma_\omega & = & (x_2 - x_1)\sigma_{\xi_1} + (x_3 - x_2)\sigma_{\xi_2} + (x_4 - x_3)\sigma_{\xi_3}\\
& = & \frac{2}{10}(-2) + \frac{4}{10}(-4) + \frac{2}{10}(-2)\\
& = & -\frac{12}{5}.
\end{eqnarray*}
\end{example}

\section{Torus knot signatures}

For coprime integers $p$ and $q$, the $(p,q)$ torus knot $T_{p,q}$ is the knot lying on the surface of a torus which winds $p$ times around the meridian and $q$ times around the longitude.  If $p$ and $q$ are not coprime, then $T_{p,q}$ is a link of more than one component.  The Alexander polynomial of $T_{p,q}$ is
  \[ \Delta_{p,q}(t) = \frac{(1-t^{pq})(1-t)}{(1-t^p)(1-t^q)}.\]
(A proof can be found in, for example, Lickorish~\cite[pg 119]{Lickorish}.)  The roots of this polynomial are the $pq^\text{th}$ roots of unity that are neither $p^\text{th}$ nor $q^\text{th}$ roots of unity.  This gives us $pq-p-q+1$ places at which the signature function could jump, namely $e^{2\pi i n/pq}$ for $n \in \Z$ with $0<n<pq$ such that $n$ is not divisible by $p$ or $q$.

\medskip

The jump functions of torus knots have been investigated by Litherland~\cite{Litherland79}.  His result is that
    \[ j_{p,q}\left(\frac{n}{pq}\right) = |L(n)| - |L(pq+n)| \]
  where $pq>n \in \N$ and
    \[ L(n) = \left\{ (i,j) \; | \; iq+jp = n, \,\, 0 \leq i\leq p, \, 0 \leq j \leq q \right\} \text{ .} \]

Notice that if $n$ is not a multiple of $p$ or $q$ then $L(n)$ and $L(pq+n)$ cannot both be nonempty.  To see this, suppose that $(i_1, j_1) \in L(n)$ and $(i_2,j_2) \in L(pq+n)$. Then $(i_2-i_1)q + (j_2-j_1)p = pq$, and since $p$ and $q$ are coprime we must have $i_2=i_1$ (mod $p$) and $j_2=j_1$ (mod $q$).  But this forces $i_1=i_2$ and $j_1=j_2$, which is a contradiction.  A similar argument shows that neither $L(n)$ nor $L(pq+n)$ can contain more than one element.  However, at least one of the two sets is nonempty. For, we can write $n=iq+jp$ with $0<i<p$, and if $j>0$ then $(i,j) \in L(n)$ whilst if $j<0$ we have $(i,j+q) \in L(pq+n)$.

\medskip

If $n$ is a multiple of $p$ or $q$ then $|L(n)| = 1 = |L(pq+n)|$.  Putting these results together gives us the following.

\begin{proposition}
\label{Prop:torusjump}
  The jump function of the $(p,q)$ torus knot is
  \[ j_{p,q}\left(\frac{n}{pq}\right) =
    \begin{cases}
      +1 & \text{ if } |L(n)|=1 \\
      -1 & \text{ if } |L(n)|=0 \\
      0  & \text{ if } n \text{ is a multiple of } p \text{ or } q
    \end{cases} \]
\end{proposition}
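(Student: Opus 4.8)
The plan is to read the proposition straight off Litherland's formula $j_{p,q}(n/pq) = |L(n)| - |L(pq+n)|$, combined with the combinatorial facts about the sets $L(n)$ established in the preceding paragraphs. Those facts are: (a) when $n$ is not a multiple of $p$ or $q$, each of $L(n)$ and $L(pq+n)$ has at most one element; (b) in that same case they cannot both be nonempty; (c) in that same case at least one of them is nonempty; and (d) when $n$ is a multiple of $p$ or $q$, $|L(n)| = |L(pq+n)| = 1$.

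With these in hand I would just run a short case analysis. If $n$ is a multiple of $p$ or $q$, fact (d) gives $j_{p,q}(n/pq) = 1 - 1 = 0$, the third case. If $n$ is not a multiple of $p$ or $q$, then facts (a)--(c) together say that exactly one of $L(n)$, $L(pq+n)$ is nonempty and that one is a singleton; hence either $(|L(n)|, |L(pq+n)|) = (1,0)$, giving $j_{p,q}(n/pq) = +1$, or $(|L(n)|, |L(pq+n)|) = (0,1)$, giving $j_{p,q}(n/pq) = -1$. These three situations are mutually exclusive and cover all $n$ with $0 < n < pq$, so the proposition follows.

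There is no genuine obstacle here: all the real work sits in the lemmas about $L(n)$ that precede the statement, and the proof of the proposition itself is pure bookkeeping. The one spot worth a second look is fact (d) — one should make sure $|L(pq+n)|$ equals $1$ and not $0$ in the divisible case, but this is immediate from the explicit solutions: if $n = kp$ with $0 < k < q$ then $(0,k) \in L(n)$ and $(p,k) \in L(pq+n)$, and symmetrically if $n = kq$. As a sanity check, the involution $(i,j) \mapsto (p-i,q-j)$ gives bijections $L(n) \leftrightarrow L(2pq-n)$ and $L(pq+n) \leftrightarrow L(pq-n)$, which reconciles the formula with the symmetry $j_{p,q}(x) = -j_{p,q}(1-x)$ of Lemma \ref{Lemma:jump_properties}.
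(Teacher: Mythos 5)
Your proposal is correct and is essentially identical to the paper's argument: the proposition is obtained by combining Litherland's formula with exactly the four facts about $L(n)$ and $L(pq+n)$ established in the preceding paragraphs, via the same case analysis. Your explicit verification of $|L(pq+n)|=1$ in the divisible case and the symmetry sanity check are welcome additions but not departures from the paper's route.
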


We need one more lemma before we are ready to find a formula for the $L^2$ signature.

\begin{lemma}
\label{Lemma:oneof}
  If $p$ and $q$ are coprime and $1 \leq n \leq pq-1$ with $n$ not a multiple of $p$ or $q$, then exactly one of $n$ and $pq-n$ can be written as $iq+jp$ for $i,j >0$.
\end{lemma}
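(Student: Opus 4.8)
The plan is to translate the condition into a statement about a single ``canonical'' representation of $n$ modulo $p$, and then to notice that this representation transforms transparently under the involution $n \mapsto pq-n$.

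First I would record the equivalence that is already implicit in the discussion preceding Proposition~\ref{Prop:torusjump}: for $1 \le n \le pq-1$ with $n$ not a multiple of $p$ or $q$, one has $n = iq + jp$ for some integers $i,j>0$ if and only if $|L(n)| = 1$. Indeed, a representation $n = iq+jp$ with $i,j>0$ automatically satisfies $i<p$ (otherwise $iq \ge pq > n$) and $j<q$ (similarly), so $(i,j) \in L(n)$; conversely an element of $L(n)$ can have no zero coordinate (that would make $n$ a multiple of $q$ or of $p$) and no coordinate equal to $p$ or $q$ (that would force $n \ge pq$), so it has both coordinates positive. Since $|L(n)| \le 1$ has already been established, the lemma becomes the assertion that exactly one of $|L(n)|$ and $|L(pq-n)|$ equals $1$.

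The heart of the argument is the following normalisation. Because $n$ is not a multiple of $p$, there is a unique $i \in \{1,\dots,p-1\}$ with $iq \equiv n \pmod p$; set $j = (n-iq)/p \in \Z$. A routine size estimate gives $-q < j < q$, and $j \neq 0$ because $n$ is not a multiple of $q$. Using that $p$ and $q$ are coprime together with $n < pq$, this $(i,j)$ is the \emph{only} pair of integers with $iq+jp=n$ and $0<i<p$. Consequently $n$ admits a representation with both coordinates positive precisely when $j>0$; in other words $|L(n)|=1$ iff $j>0$ and $|L(n)|=0$ iff $j<0$.

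To conclude, write $pq-n = (p-i)q + (-j)p$. Since $1\le p-i\le p-1$, the pair $(p-i,-j)$ is the canonical representation of $pq-n$ in exactly the same sense, and its $p$-coefficient $-j$ has the opposite sign to $j$. Hence exactly one of $n$ and $pq-n$ has a positive $p$-coefficient in its canonical representation, which is exactly the claim.

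The only delicate point --- and really the only content --- is the uniqueness built into the normalisation step: that the representation with $0<i<p$ exists, is unique, and that ``$n=iq+jp$ with $i,j>0$'' is the same condition as ``$j>0$ in that representation''. This is exactly the coprimality-plus-range reasoning already used in the excerpt to show $|L(n)|\le 1$ and that at least one of $L(n)$ and $L(pq+n)$ is non-empty, so it brings in nothing new; once it is in place, the sign flip $j\mapsto -j$ finishes the proof immediately. (There is also a reformulation through the numerical semigroup generated by $p$ and $q$, using its symmetry about the Frobenius number $pq-p-q$, but that rests on the same normalisation argument.)
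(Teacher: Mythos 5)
Your proof is correct and complete. Note, though, that the paper does not actually prove this lemma at all: it simply cites \cite[Lemma 1.6]{BeckSinai}, which is the standard symmetry statement for the numerical semigroup generated by $p$ and $q$ (the fact that for $n$ not in $\langle p,q\rangle$'s ``bad set'' exactly one of $n$ and $pq-n$ is representable, equivalently the symmetry about the Frobenius number). So your contribution is to make the argument self-contained, and the normalisation you use --- the unique $i\in\{1,\dots,p-1\}$ with $iq\equiv n \pmod p$, the bound $-q<j<q$, $j\neq 0$, and the observation that representability with both coefficients positive is equivalent to $j>0$ --- is exactly the same coprimality-plus-range reasoning the author already deploys just before Proposition~\ref{Prop:torusjump} to show $|L(n)|\le 1$ and that at least one of $L(n)$, $L(pq+n)$ is nonempty. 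The sign flip $(i,j)\mapsto(p-i,-j)$ under $n\mapsto pq-n$ then clinches it cleanly. In short: the paper buys brevity by outsourcing to a reference; you buy self-containedness at the cost of a few lines, and your argument dovetails naturally with the surrounding text. All the individual steps (uniqueness of the canonical representation, the size estimates, $j\neq 0$, and the fact that any representation with $i,j>0$ forces $0<i<p$) check out.
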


\begin{proof}
  See, for example, \cite[Lemma 1.6]{BeckSinai}.
\end{proof}

\begin{theorem}
\label{Thm:L2torussig}
  Let $p$ and $q$ be coprime positive integers.  Then the $L^2$ signature of the $(p,q)$ torus knot is
  \[ \displaystyle \int_{S^1} \sigma_{\omega} = -\frac{(p-1)(p+1)(q-1)(q+1)}{3pq} \text{ .} \]
\end{theorem}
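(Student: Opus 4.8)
The plan is to turn the integral into a finite arithmetic sum via the jump function, collapse that sum using the $\pm 1$ structure of the torus-knot jumps together with Lemma~\ref{Lemma:oneof}, and then evaluate the resulting lattice-point sum by an elementary manipulation.

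\emph{Step 1: reduce to the jump function.} Either by rearranging the double sum of Lemma~\ref{Lemma:L2asjump} (interchange the order of summation and use that $j_K(y)=-j_K(1-y)$ forces $\sum_y j_K(y)=0$) or by integrating part (4) of Lemma~\ref{Lemma:jump_properties} over $x\in[0,1]$, one obtains $\int_{S^1}\sigma_\omega=-2\sum_y y\,j_K(y)$, the sum being over the finitely many jump points. For $K=T_{p,q}$ these lie at $y=n/pq$ with $1\le n\le pq-1$, $p\nmid n$, $q\nmid n$, so
\[ \int_{S^1}\sigma_\omega \;=\; -\frac{2}{pq}\sum_{n} n\,j_{p,q}\!\left(\tfrac{n}{pq}\right). \]

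\emph{Step 2: pair up the jumps.} Let $A\subseteq\{1,\dots,pq-1\}$ be the set of integers expressible as $iq+jp$ with $i,j\ge 1$; any such $n$ automatically satisfies $0<i<p$, $0<j<q$, $iq+jp<pq$, with the representation unique, so $A$ is in bijection with the interior lattice points of the triangle $\{(x,y):x>0,\ y>0,\ x/p+y/q<1\}$. By Proposition~\ref{Prop:torusjump} a non-multiple $n$ has jump $+1$ exactly when $n\in A$ and jump $-1$ otherwise, while Lemma~\ref{Lemma:oneof} says the non-multiples split into $\tfrac12(p-1)(q-1)$ pairs $\{m,\,pq-m\}$ with $m\in A$ and $pq-m\notin A$; in particular $|A|=\tfrac12(p-1)(q-1)$. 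Summing $n\,j_{p,q}(n/pq)$ over each pair gives $2m-pq$, and so
\[ \int_{S^1}\sigma_\omega \;=\; -\frac{2}{pq}\sum_{m\in A}(2m-pq) \;=\; (p-1)(q-1)-\frac{4}{pq}\,\Sigma,\qquad \Sigma:=\sum_{m\in A}m. \]

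\emph{Step 3: evaluate $\Sigma$.} Write $\Sigma=\sum_{i=1}^{p-1}\sum_{j=1}^{\lfloor q(p-i)/p\rfloor}(iq+jp)$, reindex $i\mapsto p-i$ so that the inner bound becomes $g(i):=\lfloor iq/p\rfloor$, and expand. Two elementary inputs then finish the job: the lattice-point count $\sum_{i=1}^{p-1}\lfloor iq/p\rfloor=\tfrac12(p-1)(q-1)$, and the fact that $i\mapsto iq\bmod p$ permutes $\{1,\dots,p-1\}$. Writing $g(i)=(iq-r_i)/p$ with $r_i=iq\bmod p$, the ``Dedekind-sum'' term $\sum_i i\,r_i$ cancels in the combination that appears, and $\sum_i r_i^2=\sum_i i^2=\tfrac16(p-1)p(2p-1)$, leaving $\Sigma=\tfrac{1}{12}(p-1)(q-1)(4pq+p+q+1)$ after simplification. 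Substituting back and using $3pq-(4pq+p+q+1)=-(pq+p+q+1)=-(p+1)(q+1)$ gives $\int_{S^1}\sigma_\omega=-\tfrac{(p-1)(p+1)(q-1)(q+1)}{3pq}$.

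The hard part is not any single estimate but the bookkeeping in Steps 2 and 3: keeping signs and the exact ranges of $i,j,n$ straight while passing between $n$, the pair $\{n,pq-n\}$, and the lattice point $(i,j)$. The one genuinely clever point is the observation in Step 3 that the Dedekind-sum contribution to $\Sigma$ cancels, so that no reciprocity law is required.
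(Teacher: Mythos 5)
Your argument is correct and follows essentially the same route as the paper: the reduction of the integral to the weighted sum of jumps, the identification of the set of $+1$-jumps with the lattice-point set $\{iq+jp : i,j\ge 1\}\cap\{1,\dots,pq-1\}$, and the pairing $\{m,pq-m\}$ via Lemma~\ref{Lemma:oneof} are all exactly the steps in the paper (your $-\tfrac{2}{pq}\sum_n n\,j_{p,q}(n/pq)$ is the paper's $\tfrac{2}{pq}\sum_n(pq-n)\,j_{p,q}(n/pq)$, the two being equal because the jumps sum to zero). The one genuine difference is Step 3: where the paper simply cites Mordell for the value of $\sum_{m\in A}m$, you derive it directly from the floor-function expansion, the identity $\sum_{i=1}^{p-1}\lfloor iq/p\rfloor=\tfrac12(p-1)(q-1)$, and the fact that $i\mapsto iq\bmod p$ permutes $\{1,\dots,p-1\}$, observing along the way that the $\sum_i i\,r_i$ (Dedekind-sum) terms cancel so no reciprocity is needed. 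I checked that your closed form $\Sigma=\tfrac1{12}(p-1)(q-1)(4pq+p+q+1)$ agrees with Mordell's expression, so this buys you a self-contained, purely elementary proof at the cost of some bookkeeping that the paper outsources to a citation.
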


\begin{proof}
  Denote the jump function of the $(p,q)$ torus knot by $j_{p,q}$. The signature function at $\omega$ can be defined as the sum of the jump functions up to that point (Lemma \ref{Lemma:jump_properties}).  If $\omega_n := e^{2\pi i x}$ with $x \in (\frac{n}{pq},\frac{n+1}{pq})$ then
  \[ \sigma_{\omega_n}(T_{p,q}) = \displaystyle 2 \sum_{i=1}^n j_{p,q}\left(\frac{i}{pq}\right) \]

  We can now use Lemma \ref{Lemma:L2asjump} to find a formula for the $L^2$ signature in terms of the jump function.

  \begin{eqnarray}
  \int_{S^1} \sigma_\omega & = & \sum_{n=1}^{pq-1} \frac{1}{pq} (\sigma_{\omega_n})\\
  & = & \frac{2}{pq} \sum_{n=1}^{pq-1}\sum_{i=1}^n j_{p,q}\left(\frac{i}{pq}\right) \\
  & = & \frac{2}{pq} \left( j_{p,q}\left(\frac{1}{pq}\right) + \left(j_{p,q}\left(\frac{1}{pq}\right)+j_{p,q}\left(\frac{2}{pq}\right)\right) + \dots + \sum_{i=1}^{pq-1}j_{p,q}\left(\frac{i}{pq}\right)\right)\\
  \label{eqn:sigint} & = & \frac{2}{pq} \sum_{n=1}^{pq-1} (pq-n)\,j_{p,q}\left(\frac{n}{pq}\right)
  \end{eqnarray}

  Let $S$ be the set defined by
  \[ \bigg\{  n \in \{1,\dots, pq-1\}\; | \;  n=qx+py, \,\, 0 < x < p,\,\, 0 < y < q \bigg\} \]

  Given an integer $n \in \{1,\dots, pq-1\}$ which is not a multiple of $p$ or $q$, we can write $n=qx+py$ with $0<x<p$. By Lemma \ref{Lemma:oneof}, either $n \in S$ or $pq-n \in S$.  Proposition \ref{Prop:torusjump} tells us that in the first case we have $j_{p,q}(n/pq) = 1$, whilst in the second case we have $j_{p,q}(n/pq) = -1$.  If $n$ is a multiple of $p$ or $q$ then the jump function will be zero and so it will not contribute to the sum.

  \medskip

  We may rewrite equation (\ref{eqn:sigint}) as

  \[ \int_{S^1} \sigma_\omega = \frac{2}{pq}\left(\sum_{n \in S}(pq-n) - \sum_{n \in S}n\right) = \frac{2}{pq}\sum_{n \in S}(pq-2n) \text{ .} \]

  There are $\frac{1}{2}(p-1)(q-1)$ points in $S$, and in the paper by Mordell \cite{Mordell51} we find the following formula
  \[ \sum_{n \in S} n = \frac{1}{3}pq(p-1)(q-1) + \frac{1}{12}(p-1)(q-1)(p+q+1) \text{ .} \]

  Putting these together gives us

  \begin{eqnarray*}
    \int_{S^1} \sigma_\omega & = & \frac{2}{pq}\left(\sum_{n \in S}pq - 2\sum_{n \in S} n\right)\\
    & = & \frac{2}{pq}\left(\frac{1}{2}(p-1)(q-1)pq - 2\left(\frac{1}{3}pq(p-1)(q-1) + \frac{1}{12}(p-1)(q-1)(p+q+1)\right) \right)\\
    & = & \frac{1}{pq}(p-1)(q-1) \left(pq - \frac{4}{3}pq - \frac{1}{3}(p+q+1)\right) \\
    & = & -\frac{1}{3pq}(p-1)(q-1)(pq+p+q+1) \\
    & = & -\frac{1}{3pq}(p-1)(q-1)(p+1)(q+1)
  \end{eqnarray*}

\end{proof}

\begin{remark}
  That there is such a neat formula for the $L^2$ signature of a torus knot is all the more surprising considering the absence of an explicit formula for the usual signature $\sigma_{-1}$ of a torus knot.  There is only the following formula due to Hirzebruch~\cite{Hirzebruch67} for $p$ and $q$ odd and coprime:
    \[ \sigma_{-1}(T_{p,q}) = -\left(\frac{(p-1)(q-1)}{2} + 2(N_{p,q} + N_{q,p})\right)\]
    where
    \[ N_{p,q} = \#\left\{ (x,y) \:| \: 1 \leq x \leq \frac{p-1}{2}, \,\, 1 \leq y \leq \frac{q-1}{2}, \,\, -\frac{p}{2} < qx-py <0. \right\}\]
    Further work was done by Brieskorn~\cite{Brieskorn66} and Gordon/Litherland/Murasugi~\cite{GordonLitherlandMurasugi81}, but there appears to be no nicer formula for the signature of a torus knot.
\end{remark}

\section{Twist knots}

As an important corollary, we show that the twist knots $K_n$ are not slice.  This was proved in the 1970s by Casson and Gordon~\cite{CassonGordon86} but the following proof, which uses a result of Cochran, Orr and Teichner, is much shorter and simpler.\footnote{It is also an interesting historical point that Milnor used an early version of the $\omega$-signatures to show that an infinite number of the $K_n$ are independent in the concordance group $\C$~\cite{Milnor68}.}

\begin{definition}
The \emph{twist knots} $K_n$ are the following family of knots:
  \begin{center}
    \includegraphics[width=6cm]{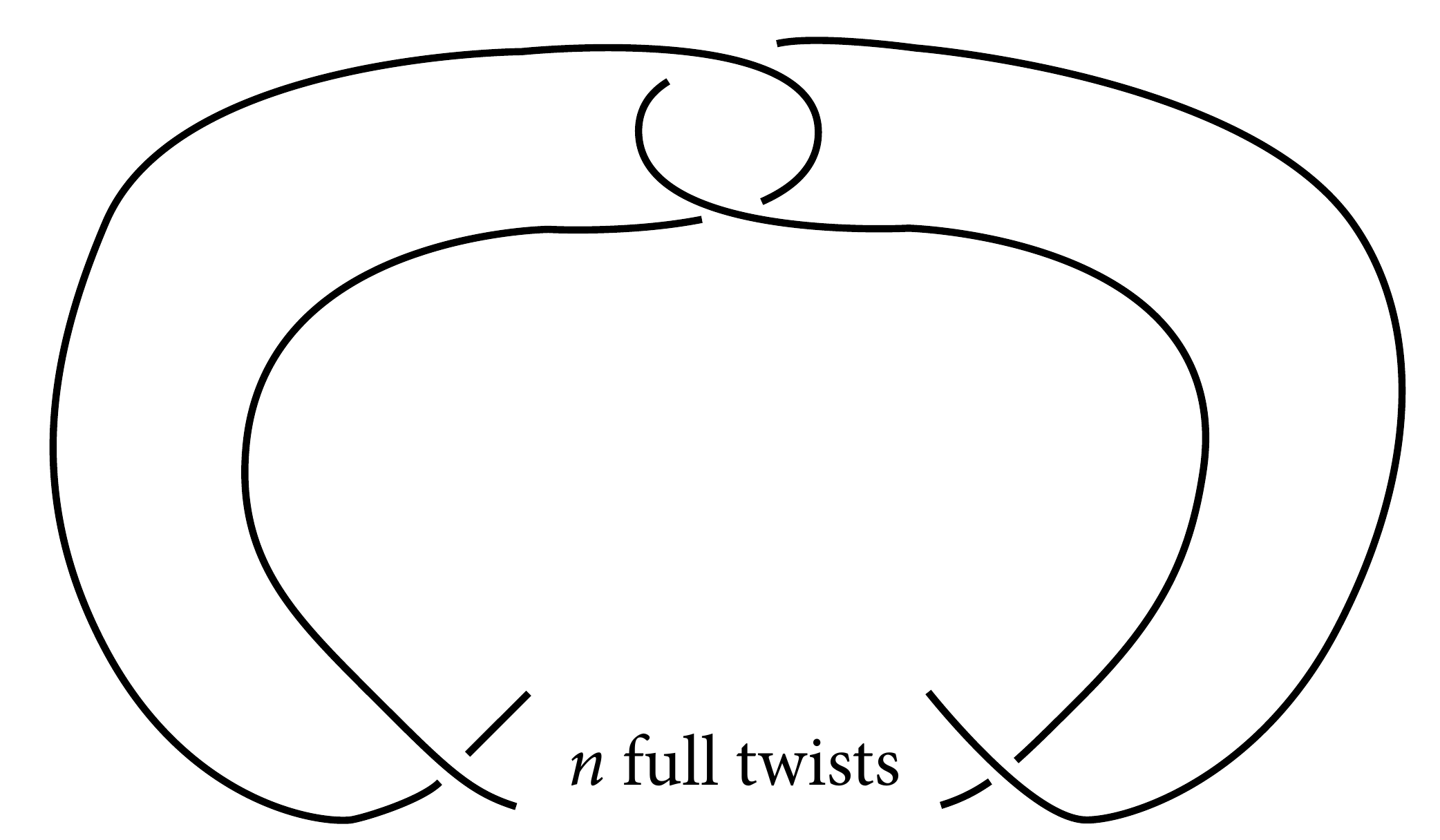}
  \end{center}
For example, $K_{-1}$ is the trefoil, $K_1$ is the figure-eight knot and $K_2$ is Stevedore's knot $6_1$.  The knot $K_n$ is sometimes called the \emph{$n$-twisted double of the unknot}.
\end{definition}

A Seifert matrix for $K_n$ is
  \[V = \left(\begin{array}{cc} -1 & 1\\ 0 & n\end{array}\right)\]
which gives the Alexander polynomial as $-nt^2+(2n+1)t-n$.  There is a class of twist knots (those for which $n=m(m+1)$ for some $m$) which are algebraically slice.  This means that there is simple closed curve $\gamma$ on the Seifert surface $F$ such that $\gamma$ is nontrivial in $H_1(F)$ and such that $\gamma^+$, which is the curve pushed off the Seifert surface, has zero linking with $\gamma$.  The consequence of this is that all signatures and other known slice invariants vanish.  The question is then: are these knots \emph{really} slice?

\medskip

The following theorem shows us that one way of finding the answer is to consider the slice properties of the curve $\gamma$ rather than those of the original knot.

\begin{theorem}[\cite{COT03}]
\label{Thm:genusonesurface}
  Suppose $K$ is a $(1.5)$-solvable knot with a genus one Seifert surface $F$. Suppose that the classical Alexander polynomial of $K$ is non-trivial. Then there exists a homologically essential simple closed curve $J$ on $F$, with self-linking zero, such that the integral over the circle of the $\omega$-signature function of $J$ (viewed as a knot) vanishes.
\end{theorem}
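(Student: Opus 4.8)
The plan is to extract the curve $J$ from algebraic sliceness and then to use the full strength of $(1.5)$-solvability to control its signatures. Since $(1.5)$-solvable knots are $(0.5)$-solvable, and $(0.5)$-solvable knots are algebraically slice, the Seifert form of $K$ on $H_1(F) \cong \Z^2$ admits a metabolizer: a primitive rank-one summand on which the form vanishes. Every primitive class in $H_1(F)$ is represented by an embedded simple closed curve, so such a metabolizer is carried by a homologically essential simple closed curve $J \subset F$ with $\operatorname{lk}(J^+, J) = 0$, i.e. with self-linking zero. A short computation with the $2 \times 2$ Seifert matrix shows that, because $\Delta_K$ is non-trivial, there are only finitely many metabolizing lines --- in the interesting cases, exactly two, carried by curves $J_1$ and $J_2$ --- and the real content of the theorem is that at least one of these has $\int_{S^1}\sigma_\omega = 0$.

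First I would reduce the desired vanishing to the vanishing of a von Neumann $\rho$-invariant. By \cite[Lemma 5.4]{COT03} (the lemma quoted above), for any knot $J$ one has $\int_{S^1}\sigma_\omega(J) = \rho(M_J, \phi_0)$, where $M_J$ is the zero-framed surgery on $J$ and $\phi_0 \colon \pi_1(M_J) \to \Z$ is the abelianisation. So it suffices to produce $i \in \{1,2\}$ together with a $4$-manifold $Z$ with $\partial Z = M_{J_i}$ over which this $\rho$-invariant is manifestly zero: if $Z$ is a solution of the appropriate type then $\rho(M_{J_i}, \phi_0)$ equals the $L^2$-signature defect $\sign^{(2)}_\Z(Z) - \sign(Z)$, and the defining conditions on a solution force that defect to vanish.

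The manifold $Z$ will be built from the $(1.5)$-solution. Fix a $(1.5)$-solution $W$ with $\partial W = M_K$. Capping the Seifert surface off with the surgery solid torus inside $M_K$ turns $F$ into a closed genus-one surface $\widehat F \subset M_K \subset W$ containing $J_i$. Because $J_i$ has self-linking zero one may surger $\widehat F$ along a push-off of $J_i$; carrying out this surgery inside $W$ and cutting along the trace of the curve dual to $J_i$ produces a cobordism from $M_K$ to $M_{J_i}$ which exhibits $K$ as a genus-one satellite of $J_i$. Glued to $W$, this cobordism gives the required $Z$ with $\partial Z = M_{J_i}$, and the intersection-form conditions that make $W$ a $(1.5)$-solution should descend to make $Z$ a solution of the type needed above. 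The correct index $i$ is singled out by a homological argument: only one of the two metabolizing classes lies in the image of $H_2$ of (a suitable cover of) $W$, and it is precisely that one which survives the surgery; this is where the hypothesis $\Delta_K \neq 1$ is used, since it makes the infinite cyclic cover of $M_K$ rich enough for the dichotomy to be meaningful.

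The step I expect to be the main obstacle is exactly this last one: verifying that (at least) one metabolizer genuinely extends over the $(1.5)$-solution and that the $4$-manifold obtained by the surgery-and-cutting construction really does satisfy the solvability hypotheses that kill the signature defect. This is the technical heart of the Cochran--Orr--Teichner argument; it rests on their analysis of how $(n)$-solvable cobordisms interact with metabolizers of the Blanchfield (equivalently, Seifert) form, together with their $L^2$-signature obstruction theorem. By contrast, the remaining ingredients --- locating $J$ from algebraic sliceness, the satellite/surgery construction, and the passage back to $\int_{S^1}\sigma_\omega$ via \cite[Lemma 5.4]{COT03} --- are comparatively routine.
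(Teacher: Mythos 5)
First, note that the paper does not prove this statement at all: it is quoted from Cochran--Orr--Teichner \cite{COT03} and used as a black box to deduce Corollaries \ref{Cor:twistslice} and \ref{Cor:twisteddouble}, so there is no in-paper proof to compare against; what you have written is an attempted reconstruction of the COT argument. Your opening reduction is sound and matches theirs: $(1.5)$-solvable implies $(0.5)$-solvable implies algebraically slice, the Seifert form on $H_1(F)\cong\Z^2$ then has a rank-one metabolizer, any primitive class on a genus one surface is realised by an embedded curve, and when $\Delta_K\neq 1$ there are essentially two candidate self-linking-zero curves $J_1,J_2$. The reduction of $\int_{S^1}\sigma_\omega(J)$ to the $\rho$-invariant $\rho(M_J,\phi_0)$ via \cite[Lemma 5.4]{COT03} is also correct.

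The genuine gap is the one you flag yourself, and it is not a small one --- it is the entire content of the theorem. Your proposed construction of $Z$ (``surger $\widehat F$ along a push-off of $J_i$ and cut along the trace of the dual curve'') is not a well-defined recipe as written, and it is not how the argument actually runs: one does not produce a solution bounding $M_{J_i}$. Instead one works with $\rho$-invariants of $M_K$ itself. The $(1.5)$-solution $W$ determines a metabolizer $P=\ker\bigl(H_1(M_K;\Q[t^{\pm 1}])\to H_1(W;\Q[t^{\pm 1}])\bigr)$ of the Blanchfield form --- this, rather than ``the image of $H_2$ of a cover of $W$'', is where $\Delta_K\neq 1$ enters, since it guarantees the Alexander module is nonzero and hence that $P$ must contain the submodule generated by a lift of one of the $J_i$. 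One then combines the vanishing of the $L^2$-signature defect of $W$ for the metabelian representation $\pi_1(M_K)\to\pi_1(W)/\pi_1(W)^{(2)}_r$ with a satellite/additivity formula identifying the contribution of $J_i$ with $\rho(M_{J_i},\phi_0)=\int_{S^1}\sigma_\omega(J_i)$. None of this is routine, and your sketch asserts rather than establishes it. As a blind reconstruction the architecture is broadly right, but the proof is incomplete exactly where the theorem is hard; for the purposes of this paper the honest course is to cite \cite{COT03}, as the author does.
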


\begin{corollary}
\label{Cor:twistslice}
  The twist knots $K_n$ are not slice unless $n=0$ or $n=2$.
\end{corollary}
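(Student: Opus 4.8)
The plan is to suppose, for a contradiction, that $K_{n}$ is slice with $n\notin\{0,2\}$, and to derive a contradiction from Theorem \ref{Thm:genusonesurface} together with Theorem \ref{Thm:L2torussig}. A slice knot is $(1.5)$-solvable (part of the solvable filtration of \cite{COT03}), and for $n\ne 0$ the Alexander polynomial $-nt^{2}+(2n+1)t-n$ of $K_{n}$ is non-trivial, so Theorem \ref{Thm:genusonesurface} applies to the genus one Seifert surface $F$ of $K_{n}$ carrying the Seifert matrix $V$ displayed above: there is a homologically essential simple closed curve $J$ on $F$, of self-linking zero, with $\int_{S^{1}}\sigma_{\omega}(J)=0$. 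The strategy is to show no such $J$ can exist unless $n\in\{0,2\}$.

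First I would determine which homology classes can support $J$. Being homologically essential, $J$ is non-separating on the once-punctured torus $F$, hence represents a primitive class $v=(r,s)\in H_{1}(F;\Z)\cong\Z^{2}$, and its self-linking number is $\lk(J,J^{+})=v^{T}Vv=-r^{2}+rs+ns^{2}$. So the self-linking-zero condition reads $r^{2}-rs-ns^{2}=0$. Viewed as a quadratic in $r$, this has a primitive integer solution with $s\ne 0$ exactly when the discriminant $1+4n$ is a perfect square, i.e. (since $1+4n$ is odd) exactly when $n=k(k+1)$ for some integer $k\ge 0$; and $s=0$ forces $r=0$, which is not primitive. When $n=k(k+1)$ the equation factors as $(r-(k+1)s)(r+ks)=0$, so up to sign the only primitive solutions are $(k+1,1)$ and $(-k,1)$. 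In particular, if $n$ is not of the form $k(k+1)$ — which includes every $n<0$ — then $F$ carries no homologically essential curve of self-linking zero at all, contradicting Theorem \ref{Thm:genusonesurface}; hence $K_{n}$ is not slice for all such $n$.

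There remain the algebraically slice twist knots $n=k(k+1)$, where there are exactly two candidate curves, $J_{1}$ in class $(k+1,1)$ and $J_{2}$ in class $(-k,1)$ (reversing orientation does not change the knot type). The cases $k=0$ and $k=1$ are $K_{0}$ and $K_{2}$, which are excluded from the statement and really are slice, so assume $k\ge 2$. I would now draw $F$ explicitly — a disc with two bands, one carrying a $-1$ twist (the clasp) and one carrying $n$ full twists — and read off the isotopy type in $S^{3}$ of the curves running over these bands in the two metabolizing classes. I expect each of $J_{1}$ and $J_{2}$ to be a torus knot whose parameters are governed by $k$ and the framings $-1$ and $n$; Theorem \ref{Thm:L2torussig} then gives $\int_{S^{1}}\sigma_{\omega}(J_{i})$ in closed form, and one checks this is nonzero for every $k\ge 2$ (while it vanishes precisely in the unknotted cases $k=0,1$, consistent with $K_{0}$ and $K_{2}$ being slice). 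Since Theorem \ref{Thm:genusonesurface} requires one of $J_{1},J_{2}$ to have vanishing $L^{2}$ signature, this is the desired contradiction.

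The routine parts are the discriminant-and-factorisation arithmetic of the second paragraph and the evaluation of the torus-knot $L^{2}$ signatures once the knots are named. The main obstacle is the middle step: drawing the genus one Seifert surface of $K_{k(k+1)}$ carefully enough to identify, unambiguously and with the correct framings, the two self-linking-zero curves on it as knots in $S^{3}$, and confirming they are torus knots. Once that identification is in place, Theorem \ref{Thm:L2torussig} closes the argument at once — a pleasant feature being that the paper's main theorem feeds directly into its chief application.
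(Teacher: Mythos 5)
Your proposal is correct and, for the crucial case $n=m(m+1)$, coincides with the paper's argument: identify the self-linking-zero curves on the genus one Seifert surface, recognise them as $(m,m+1)$ torus knots, and invoke Theorem \ref{Thm:L2torussig} to contradict Theorem \ref{Thm:genusonesurface}. Where you genuinely diverge is in the remaining cases. The paper disposes of $n<0$ by computing the classical signature directly, and of positive $n$ with $4n+1$ not a square via the Fox--Milnor condition \cite{FoxMilnor66}; you instead handle all of these uniformly by observing that the quadratic form $v^TVv=-r^2+rs+ns^2$ has no primitive isotropic vector unless $1+4n$ is a perfect square, so that Theorem \ref{Thm:genusonesurface} is contradicted outright by the \emph{non-existence} of any homologically essential self-linking-zero curve. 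That is a clean and valid alternative (for genus one surfaces the existence of such a curve is equivalent to algebraic sliceness, so the two routes encode the same obstruction), and it has the aesthetic advantage of deriving everything from the single theorem of Cochran--Orr--Teichner rather than mixing in older invariants; the paper's route has the advantage of not leaning on the full strength of Theorem \ref{Thm:genusonesurface} where elementary tools suffice. The one step you flag but do not carry out --- identifying the curves in the classes $(k+1,1)$ and $(-k,1)$ as torus knots in $S^3$ with the correct parameters --- is exactly the step the paper also delegates, to Kauffman~\cite[Chapter VIII]{Kauffman}, so your assessment of where the real work lies is accurate.
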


\begin{proof}
  The Alexander polynomial of $K_n$ is $-nt^2 - (2n+1)t +n$.  If $n<0$ then $\Delta_{K_n}$ has distinct roots on the unit circle and an easy computation shows that the signature is non-zero.  If $n>0$ then $\Delta_{K_n}$ is reducible if and only if $4n+1$ is a square.  Since the Alexander polynomial of a slice knot has the form $f(t)f(t^{-1})$ \cite{FoxMilnor66}, it follows that $K_n$ cannot be slice if $4n+1$ is not a square.

  \medskip

  Suppose $4n+1$ = $l^2$ with $l=2m+1$.  Then $n=m(m+1)$.  Using the obvious genus $1$ Seifert surface $F$ for $K_{m(m+1)}$ it can be seen that the only simple closed curve on $F$ with self-linking zero is the $(m,m+1)$ torus knot (see, for example, Kauffman~\cite[Chapter VIII]{Kauffman}).  Since the $L^2$ signature for any torus knot is non-zero (except for $m=0,-1,1,-2$) by Theorem \ref{Thm:L2torussig}, this means that $K_{m(m+1)}$ cannot be (1.5)-solvable and therefore not slice unless $n=0$ or $n=2$.
\end{proof}

\begin{corollary}
\label{Cor:twisteddouble}
Let $K$ be a knot and $D_n(K)$ the $n$-twisted double ($n \neq 0$) of $K$ as shown in Figure 2.
\begin{itemize}
\item[(a)] $D_n(K)$ cannot be slice unless $n=m(m+1)$ for some $m \in \Z$ and $\int_{S^1} \sigma_\omega(K) = \frac{(m-1)(m+2)}{3}$.  In particular, $D_2(K)$ can only be slice if $\int_{S^1} \sigma_\omega(K)=0$.
\item[(b)] For any given $K$ with $\int_{S^1} \sigma_\omega(K) \neq 0$, there is at most one $D_n(K)$ which can be slice.
\end{itemize}
\end{corollary}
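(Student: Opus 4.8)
The plan is to run the argument of Corollary~\ref{Cor:twistslice} with the companion knot $K$ in place of the unknot, keeping track of how $K$ enters the knot type of the derivative curve produced by Theorem~\ref{Thm:genusonesurface}. First I would record the algebraic input. The knot $D_n(K)$ bounds a standard genus one Seifert surface $F$ --- a disc with a clasp band and a band following $K$ with framing $n$ --- and since that second band has winding number zero in a tubular neighbourhood of $K$, it is homologically trivial on $F$; hence the Seifert matrix is $V=\bigl(\begin{smallmatrix}-1&1\\0&n\end{smallmatrix}\bigr)$ independently of $K$, exactly as for the twist knot $K_n$, and $\Delta_{D_n(K)}(t)=-nt^2+(2n+1)t-n$. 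By Fox--Milnor~\cite{FoxMilnor66}, if $D_n(K)$ is slice then this polynomial factors as $f(t)f(t^{-1})$, and exactly as in Corollary~\ref{Cor:twistslice} this forces $4n+1$ to be a perfect square, say $4n+1=(2m+1)^2$, i.e.\ $n=m(m+1)$; since $n\neq0$ we have $m\notin\{0,-1\}$.

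Next I would invoke Theorem~\ref{Thm:genusonesurface}: a slice knot is $(1.5)$-solvable~\cite{COT03}, and $\Delta_{D_n(K)}$ is non-trivial because $n\neq0$, so there is a homologically essential simple closed curve $J\subset F$ with self-linking zero and $\int_{S^1}\sigma_\omega(J)=0$. In the basis above, the self-linking form on $H_1(F)$ is $q(x,y)=-x^2+xy+ny^2$, and with $n=m(m+1)$ it factors as $-(x-(m+1)y)(x+my)$; hence its primitive (equivalently, simple closed) zeros are $(m+1,1)$ and $(-m,1)$. Tracing such a curve on $F$ as in the unknot case (see Kauffman~\cite[Chapter VIII]{Kauffman}), and using that it runs exactly once along the band following $K$, should show that it is the connected sum of $K$ with the self-linking-zero curve of the twist knot $K_n$, that is, with the $(m,m+1)$ torus knot; so $J=K\,\#\,T_{m,m+1}$. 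Replacing $m$ by $-m-1$ if necessary, I may assume $m\geq1$.

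To finish I would observe that the $L^2$ signature is additive under connected sum and that, by Theorem~\ref{Thm:L2torussig}, $\int_{S^1}\sigma_\omega(T_{m,m+1})=-\frac{(m-1)(m+1)m(m+2)}{3m(m+1)}=-\frac{(m-1)(m+2)}{3}$. Hence $0=\int_{S^1}\sigma_\omega(J)=\int_{S^1}\sigma_\omega(K)-\frac{(m-1)(m+2)}{3}$, which is the condition asserted in (a); and for $D_2(K)$ we have $n=2$, so $m=1$ (or $m=-2$) and the right-hand side is $0$. For part (b), if $D_{n_1}(K)$ and $D_{n_2}(K)$ are both slice then, writing $n_i=m_i(m_i+1)$, we get $\frac{(m_1-1)(m_1+2)}{3}=\int_{S^1}\sigma_\omega(K)=\frac{(m_2-1)(m_2+2)}{3}$, which expands to $(m_1-m_2)(m_1+m_2+1)=0$; in either case $n_1=m_1(m_1+1)=m_2(m_2+1)=n_2$, so at most one $D_n(K)$ can be slice.

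The hard part will be the geometric identification $J=K\#T_{m,m+1}$: one must pin down the embedded self-linking-zero curve on the standard surface of $D_n(K)$ precisely enough to recognise both the torus-knot factor coming from the clasp band and the connect-summand $K$ coming from the band that follows $K$, and one should also check that the second isotropic class $(-m,1)$ produces the same constraint rather than a weaker one. Once that is settled, the remaining ingredients --- Fox--Milnor, the factorisation of $q$, additivity of the $L^2$ signature under connected sum, and the final arithmetic --- are entirely routine.
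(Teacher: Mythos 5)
Your proposal is correct and follows essentially the same route as the paper: Fox--Milnor forcing $n=m(m+1)$, identifying the self-linking-zero curve on the obvious genus one surface as $K\,\#\,T_{m,m+1}$, then combining additivity of the $L^2$ signature with Theorem \ref{Thm:L2torussig} and the quadratic symmetry $m\mapsto -m-1$ for part (b). The geometric identification you flag as the hard part is likewise only asserted (via the Kauffman reference) in the paper, and your factorisation of the self-linking form is in fact a more explicit justification of that step than the paper gives.
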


\begin{figure}
  \label{fig:twisteddouble}
  \centering
  \includegraphics[width=8cm]{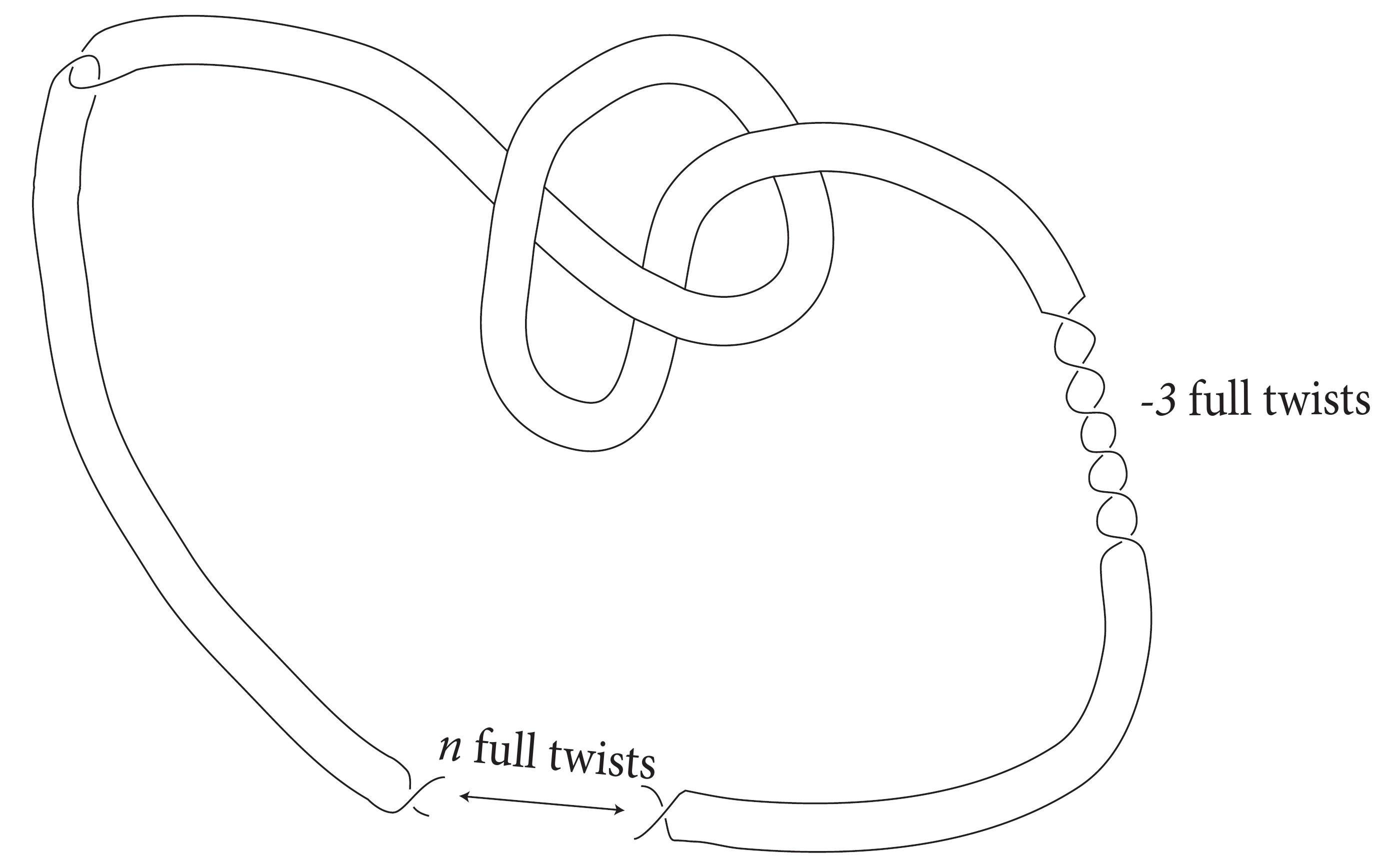}
  \caption{The $n$-twisted double of the right-handed trefoil.}
\end{figure}

\begin{proof}
The Alexander polynomial of $D_n(K)$ is once again $-nt^2 - (2n+1)t +n$ and the same argument as in the proof of Corollary \ref{Cor:twistslice} shows that $D_n(K)$ is algebraically slice if and only if $n=m(m+1)$ for some integer $m$. (Notice that if $n=0$ then the Alexander polynomial is trivial and $D_0(K)$ is slice by Freedman's work \cite{FreedmanQuinn}.)  The zero-framed curve on the obvious Seifert surface is the connected sum of $K$ and the $(m,m+1)$ torus knot, $K \# T_{(m,m+1)}$.  If we denote the $L^2$ signature by $s$, we have
  \begin{eqnarray*}
    s(K \# T_{m(m+1)}) & = & s(K) + s(T_{(m,m+1)})\\
                       & = & s(K) - \frac{(m-1)(m+1)m(m+2)}{3m(m+1)}\\
                       & = & s(K) - \frac{(m-1)(m+2)}{3}
  \end{eqnarray*}
By Theorem \ref{Thm:L2torussig}, $D_n(K)$ can only be slice if $s(K) = \frac{(m-1)(m+2)}{3}$.  In particular, if $m=1$ or $m=-2$ then $T_{(m,m+1)}$ is the unknot and so $s(K)$ must be zero for $D_2(K)$ to be slice.  This proves (a).
For (b), suppose that $3s(K) = (m-1)(m+2) \neq 0$.  Rearranging, we get $m^2+m - 2-3s(K)=0$.  Suppose that $m_1$ and $m_2$ are roots.  Then $m_1+m_2 = -1$, so $m_1(m_1+1) = -(m_2+1)(-m_2) = m_2(m_2+1)$, giving only one value for $n$.
\end{proof}

In the paper \cite{Kim05} Kim proves that for any knot $K$, all but finitely many algebraically slice twisted doubles of $K$ are linearly independent in the concordance group $\C$.  Using our theorem we can conjecture that there is a much stronger result about the independence of the twisted doubles of $K$.

\begin{conjecture} For a fixed knot $K$, the $D_{m(m+1)}(K)$ are linearly independent in $\C$ for all but one (or two, if $\int_{S^1} \sigma_\omega(K) = 0$) values of $m(m+1)$.
\end{conjecture}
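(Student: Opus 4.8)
The plan is to combine Kim's ``polynomial splitting'' of Casson--Gordon invariants~\cite{Kim05} with the closed form of Theorem~\ref{Thm:L2torussig}, the point of the latter being that it identifies \emph{exactly} which doubles can fail to be independent rather than merely showing that finitely many do. Write $s(\cdot)=\int_{S^1}\sigma_\omega(\cdot)$, a homomorphism $\C\to\R$ that vanishes on slice knots, and recall from the proof of Corollary~\ref{Cor:twisteddouble} that the self-linking-zero curve on the genus one surface of $D_{m(m+1)}(K)$ is $K\#T_{m,m+1}$ and that $s(K\#T_{m,m+1})=s(K)-\tfrac{(m-1)(m+2)}{3}$. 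Two of the doubles are forced to be exceptional from the outset: $D_0(K)$ has trivial Alexander polynomial and so is topologically slice by Freedman~\cite{FreedmanQuinn}, while $D_2(K)$ has companion $K\#T_{1,2}=K$ and by Corollary~\ref{Cor:twisteddouble} can be slice only when $s(K)=0$. These account for the ``one (or two)'' exceptions, and the claim to be proved is that \emph{no other} $\Z$-linear relation holds.

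First I would set up the obstruction. Suppose $J=\#_i a_i D_{m_i(m_i+1)}(K)$ is slice, with the $m_i(m_i+1)$ distinct non-zero pronic numbers and each $a_i\neq 0$. Each summand is algebraically slice with Alexander polynomial $\delta_{m_i}(t)\doteq\bigl(m_it-(m_i+1)\bigr)\bigl((m_i+1)t-m_i\bigr)$, so $\Delta_J\doteq\prod_i\delta_{m_i}(t)^{|a_i|}$, and the key point is that the $\delta_{m_i}$ are pairwise coprime in $\Q[t,t^{-1}]$ because their roots $(m_i{+}1)/m_i$ are distinct real numbers. Invoking this coprimality, one runs the usual primary decomposition of the linking form on each prime-power branched cyclic cover $\Sigma_q(J)$ --- which is the connected sum of $|a_i|$ copies, suitably oriented, of the covers $\Sigma_q(D_{m_i(m_i+1)}(K))$ --- so that any metabolizer arising from a slice disc must respect the decomposition by $\delta_{m_i}$, and the vanishing of all Casson--Gordon invariants of $J$ forces, for each $i$ in turn, the vanishing of the Casson--Gordon obstruction of the $\delta_{m_i}$-localised part, which Kim identifies with that of $\#^{|a_i|}D_{m_i(m_i+1)}(K)$ on a metabolizer of its own branched cover.

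Next I would evaluate that per-summand obstruction using the torus-knot formula. Kim's computation expresses the Casson--Gordon signatures of a twisted double through integrals of the Tristram--Levine signatures of its companion over arcs of $S^1$ cut out by the character; for $\#^{a}D_{m(m+1)}(K)$ the companion is $\#^{a}(K\#T_{m,m+1})$, and the appropriate average (or extremal value) over characters vanishing on a metabolizer comes out proportional to $a\cdot s(K\#T_{m,m+1})=a\bigl(s(K)-\tfrac{(m-1)(m+2)}{3}\bigr)$ by Theorem~\ref{Thm:L2torussig}. Hence whenever $s(K)\neq\tfrac{(m-1)(m+2)}{3}$ the knot $\#^{a}D_{m(m+1)}(K)$ is non-slice for every $a\neq0$, so $D_{m(m+1)}(K)$ has infinite order; and since the value $\tfrac{(m-1)(m+2)}{3}$ determines the pronic number $m(m+1)$, this equation has at most one pronic solution $m^\ast(m^\ast+1)$. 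Feeding this back into the splitting of the previous step, a slice relation $J$ can only involve that single value (together with $0$ and, when $s(K)=0$, the value $2$, which we have already discarded), and then $J$ reduces to one multiple $a_{i_0}D_{m^\ast(m^\ast+1)}(K)$; for $m^\ast=1$ this is the $D_2(K)$ exception with $s(K)=0$, while for $m^\ast\ge 2$ the companion $K\#T_{m^\ast,m^\ast+1}$ is a non-trivial knot with $s=0$, and one must still show $\#^{a}D_{m^\ast(m^\ast+1)}(K)$ is non-slice even though the $L^2$-signature obstruction is now silent --- here one uses that the Casson--Gordon invariants depend on the values $\sigma_\omega(K\#T_{m^\ast,m^\ast+1})$ at the specific prime-power roots of unity indexing the characters, and these are not all zero because the torus-knot signature function is non-constant.

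The main obstacle is twofold. The genuinely hard analytic/number-theoretic part is making the primary decomposition simultaneous across \emph{all} branched covers: one needs the orders $\prod_j|\delta_{m_i}(\zeta_q^j)|$ of the summand covers to have coprime torsion at the primes $r$ and orders $q$ that carry the Casson--Gordon characters, so that each character detects only one $m_i$; verifying this for all $q$ at once is precisely the technical core of Kim's ``all but finitely many'' result and is what would have to be pushed to get the sharp count. The second obstacle, and the reason Theorem~\ref{Thm:L2torussig} is essential rather than decorative, is the metabolizer ambiguity inside a single summand: the slice hypothesis only demands vanishing on \emph{some} metabolizer, so one must show no metabolizer can kill the obstruction, and the explicit formula is used to bound the relevant signature integral of $K\#T_{m,m+1}$ quantitatively away from zero. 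Modulo these two points --- together with the finer Casson--Gordon analysis needed for the single possibly-exceptional value $m^\ast(m^\ast+1)$ --- the argument sketched above upgrades Kim's statement to the conjectured ``all but one (or two)''.
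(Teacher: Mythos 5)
This statement is labelled a conjecture in the paper, and the paper offers no proof --- indeed it says explicitly that a proof ``will require Theorem~\ref{Thm:genusonesurface} to be extended to connected sums of genus one Seifert surfaces.'' Your proposal is a reasonable programme, but it is not a proof, and you have in fact flagged the two places where it breaks down. The first genuine gap is the step where you assert that the obstruction extracted from $\#^{a}D_{m(m+1)}(K)$ ``comes out proportional to $a\cdot s(K\#T_{m,m+1})$.'' Casson--Gordon invariants see the Tristram--Levine signatures of the companion only at the prime-power roots of unity singled out by characters on branched covers; to convert that discrete data into the full integral $\int_{S^1}\sigma_\omega$ --- which is what Theorem~\ref{Thm:L2torussig} computes --- you need either a limiting argument over all prime powers (as in \cite{CassonGordon86}) or the $L^2$ machinery of \cite{COT03}. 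But Theorem~\ref{Thm:genusonesurface} is stated only for a \emph{genus one} Seifert surface, whereas $\#_i a_iD_{m_i(m_i+1)}(K)$ has genus $\sum_i|a_i|$; extending it to this setting is precisely the missing ingredient the paper identifies, and your sketch does not supply it.

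The second gap is the metabolizer problem inside a single block. The coprimality of the $\delta_{m_i}$ does let you split the linking form of $\Sigma_q(J)$ into $\delta_{m_i}$-primary pieces, but the $\delta_{m_i}$-piece is the form on $|a_i|$ copies of $\Sigma_q(D_{m_i(m_i+1)}(K))$, and a metabolizer of that direct sum need not be a direct sum of metabolizers of the copies. The characters it supports can mix the copies, so the resulting obstruction is a sum of signature terms over an uncontrolled family of characters (equivalently, of curves on the summed Seifert surface) rather than $a_i$ times one fixed quantity; without positivity or some other coherence among those terms they could cancel. This is exactly the point at which \cite{Kim05} loses control and obtains only ``all but finitely many,'' and saying that Theorem~\ref{Thm:L2torussig} ``bounds the relevant integral away from zero'' does not address it, because the quantity that must be nonzero is no longer a single such integral. (A smaller point: the final paragraph about the residual value $m^\ast$ is not needed --- the conjecture permits that one value to be exceptional --- but its presence suggests the target statement has been slightly misread.) Until these two steps are filled in, the statement remains, as the paper says, a conjecture.
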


The proof of this conjecture will require Theorem \ref{Thm:genusonesurface} to be extended to connected sums of genus one Seifert surfaces.

\begin{acknowledgement}
I would like to thank Andrew Ranicki for suggesting this as an interesting problem and for supplying me with the correct ingredients to solve it.  I would also like to thank Berian James for helping me to first find the formula by unconventional means, Peter Teichner for suggesting the application to slicing the twist knots, and Matthew Heddon for spotting a mistake in Corollary \ref{Cor:twisteddouble}.
\end{acknowledgement}

\bibliographystyle{amsalpha} \bibliography{bibfile}

\end{document}